\title{The Alignment Properties of Monge-Amp\`ere based Mesh Redistribution Methods: I Linear Features}
\author{C.J. Budd\thanks{University of Bath, UK, BA2 7AY (mascjb@bath.ac.uk).},\and
 R. D. Russell\thanks{Simon Fraser University, Burnaby, BC, Canada, V5N IS6 (rdr@sfu.ca).}\and E. Walsh\thanks{Simon Fraser University, Burnaby, BC, Canada, V5N IS6 (ewalsh@sfu.ca).}}
\begin{document}

\maketitle

\begin{abstract}
Many adaptive mesh methods explicitly or implicitly use equidistribution and alignment. These principles can be considered central to mesh adaption \cite{HRBook}. A Metric Tensor $\mathbf{M}$ is the tool by which one describes the desired level of mesh anisotropy. In contrast a mesh redistribution method based on the Monge-Amp\`ere equation \cite{BW:06}, \cite{BW:09}, \cite{BHRActa}, \cite{Budd96}, which combines equidistribution with optimal transport,  does not require the explicit construction of a Metric Tensor $\mathbf{M}$, although such an $\mathbf{M}$ always exists. An interesting question is whether such a method produces an anisotropic mesh. To answer this question we consider the general metric $\mathbf{M}$ to which an optimally transported mesh aligns. We derive the exact metric $\mathbf{M}$, involving expressions for its eigenvalues and eigenvectors, for a model linear feature. The eigenvectors of $\mathbf{M}$ are shown to be orthogonal and tangential to the feature, and the ratio of the eigenvalues is shown to depend, both locally and globally, on the value of the scalar density function $\rho=\sqrt{\det{\mathbf{M}}}$. We thereby demonstrate how an optimal transport method produces an anisotropic mesh along a given feature while equidistributing a suitably chosen scalar density function. Numerical results for a Parabolic Monge-Amp\`ere moving mesh method \cite{BW:06}, \cite{BW:09}, \cite{BHRActa}, \cite{Budd96}, \cite{walsh} are included to verify these results, and a number of additional questions are raised.
\end{abstract}

\begin{keywords} 
Alignment, Anisotropy, Mesh Adaption, Metric Tensor, Monge-Amp\`ere.

\end{keywords}

\begin{AMS}
35J96, 65M50, 65N50
\end{AMS}

\pagestyle{myheadings}
\thispagestyle{plain}
\markboth{}{}
\section{Introduction}

\noindent Many non-linear partial differential equations (PDEs), including those for convection and reaction dominated problems, 
have solutions which exhibit a large variation  in a small region of the physical domain.
Numerical computations of such solutions often can be obtained more efficiently and accurately using some form of mesh adaptation/redistribution. 
For such methods it is usually desirable to adjust the size, shape and orientation  of the mesh elements to the underlying physical problem so that the mesh adapts to the geometry and flow field of the solution. Mesh adaption/redistribution has been applied in many areas of science and engineering, and has been used with great success to solve problems involving boundary layers, inversion layers, shock waves, ignition fronts, storm fronts, gas combustion and groundwater hydrodynamics \cite{walsh}, \cite{HZZ}, \cite{Hyman}, \cite{stockie}, \cite{T},  \cite{Tang},\cite{TangTang}. 

Problems often have solutions which display anisotropy, changing more significantly in one direction than the others, and
an anisotropic mesh is desirable to represent the solution features. However many adaptive methods, such as Winslow's celebrated method \cite{Wins}, explicitly adjust only the size of mesh elements, typically using the equidistribution of some measure of the solution as a guide, possibly enforcing unnecessary shape regularity. This can lead to isotropic meshes which are potentially inefficient for resolving the structure of the anisotropic solutions, in that they result in a large number of mesh points being concentrated along the anisotropic feature. Thus, there is considerable interest in finding moving mesh algorithms which 
can be assured to work well for anisotropic problems. The idea of using a Metric Tensor to quantify anisotropy was exploited in two-dimensional mesh generation as early as the 1990's \cite{Fortin}, \cite{Az} , and accurate a posteriori \cite{Pic}, \cite{H2010}, and a priori \cite{Form}, \cite{H2005}, anisotropic error estimates were developed. It was established that the absolute value of the Hessian matrix is a metric \cite{Hecht}, and since this metric arises in error bounds that estimate interpolation error, it can be used to generate a mesh which minimises interpolation error \cite{Hecht2}, \cite{Cao}, \cite{Vallet}, \cite{H2005b}.  

In this paper we consider an alternative approach in which an adapted mesh is generated by using an optimal transport procedure and solving an associated Monge-Amp\`ere equation. This method,
described in \cite{BW:06}, \cite{BW:09}, \cite{BHRActa}, \cite{walsh}, calculates a mesh which locally equidistributes a measure of the solution and satisfies certain global regularity constraints. It generates the mesh by solving a scalar equation, and has the advantages of being robust, flexible and cheap to implement, for both two and three dimensional problems. We shall show, both analytically and through numerical experiments,
that for anisotropic problems with strong linear features the enforcement of the global regularity conditions leads to anisotropic meshes closely aligned to these features and thus is well suited to PDE computations. In a forthcoming paper we will also show that these methods can also align to anisotropic features with strong curvature.

An outline of the paper is as follows. In Section 2 we consider the basic principles of equidistribution and alignment that are central to mesh adaptation. We then describe the 
above mesh generation method that combines equidistribution with ideas from optimal transport theory, and introduce alignment measures directly relevant to this method. In Section 3, for a set of model problems with strong linear features we rigorously derive the Metric Tensor $\mathbf{M}$ to which an optimally transported mesh aligns,
and thereby show close alignment to these features. In Section 4 we present numerical examples to verify the results in Section 3 and present further test cases to illustrate alignment properties for more complex linear and non-linear features. Lastly, our main conclusions are given in Section 5.

\section{Basic principles of anisotropic mesh redistribution}

\subsection{ Equidistribution and Alignment}
Following  \cite{HRBook}, an effective approach for studying redistribution of an initially uniform mesh is to generate an invertible coordinate transformation $\mathbf{x}=\mathbf{x}(\boldsymbol{\xi}):\Omega_c\rightarrow \Omega_p$, from a fixed computational domain $\Omega_c$ to the physical domain  $\Omega_p$ in which the underlying PDE is posed. The mesh in $\Omega_p$ is then generated as the image of a fixed uniform computational mesh in $\Omega_c$. The alignment and other features of the mesh can then be determined by calculating the properties of the transformation $\mathbf{x}(\boldsymbol{\xi})$.

Assuming for the moment that $\mathbf{x}$ and $\boldsymbol{\xi}$ are given, and for simplicity restricting attention to the  2D case, 
consider the local properties of this transformation. 
Let $\hat{K}$ be a circular set in $\Omega_c$, centred at $\boldsymbol{\xi_0}$, such that
$$\hat{K} = \{ \boldsymbol{\xi}: (\boldsymbol{\xi}-\boldsymbol{\xi_0})^{T}(\boldsymbol{\xi}-\boldsymbol{\xi_0})=\hat{r}^2 \},$$
where the radius $\hat{r}\propto(\vert \Omega_c \vert / N)^{1/2}$ and $N$ represents the number of mesh elements.
Linearizing  about  $\boldsymbol{\xi_0}$ we obtain
$$\mathbf{x}(\boldsymbol{\xi})=\mathbf{x}(\boldsymbol{\xi_0})+\mathbf{J}(\boldsymbol{\xi_0})(\boldsymbol{\xi}-\boldsymbol{\xi_0})+\mathrm{O}(\vert \boldsymbol{\xi}-\boldsymbol{\xi_0}\vert^2),$$ and
the corresponding image set $K=\mathbf{x}(\hat{K})$ in $\Omega_p$ is approximately given by
$${K} = \{ \mathbf{x}: (\mathbf{x}-\mathbf{x}(\boldsymbol{\xi_0}))^{T}\mathbf{J}^{-T}\mathbf{J}^{-1}(\mathbf{x}-\mathbf{x}(\boldsymbol{\xi_0}))=\hat{r}^2 \}.$$
As the set $K$ and $\boldsymbol{\xi_0}$ are arbitrary, we can replace $\boldsymbol{\xi_0}$ by a general point $\boldsymbol{\xi}$.
The Jacobian matrix $\mathbf{J}$ and its determinant $J$, referred to simply as the Jacobian, are 
\[
\mathbf{J}=\left[\begin{array}{cc} x_{\xi}& x_{\eta}\\ y_{\xi} & y_{\eta}\end{array}\right] \hspace{1cm} {J}=\left \vert \begin{array}{cc} x_{\xi}& x_{\eta}\\ y_{\xi} & y_{\eta}\end{array}\right \vert=x_{\xi}y_{\eta}-x_{\eta}y_{\xi}.
\]
Taking the singular value decomposition  
\[
\mathbf{J}=U\Sigma V^T, \hspace{1cm}\Sigma=\mathrm{diag}(\sigma_1,\sigma_2),
\]
it follows that
\[
{K} = \{ \mathbf{x}: (\mathbf{x}-\mathbf{x}(\boldsymbol{\xi_0}))^{T} \; U \; \Sigma^{-2} \;  U^T \; (\mathbf{x}-\mathbf{x}(\boldsymbol{\xi_0}))=\hat{r}^2 \}. 
\]
so that  the orientation of $K$ is determined by the left singular vectors  $U=[\mathbf{e_1},\mathbf{e_2}]$, and the size and shape by the singular values $\sigma_1$ and $\sigma_2$ (see Fig \ref{fig1:map}).
\begin{figure}[hhhhhhhh!!!!!]
\begin{center} \includegraphics[height=5cm,width=6cm]{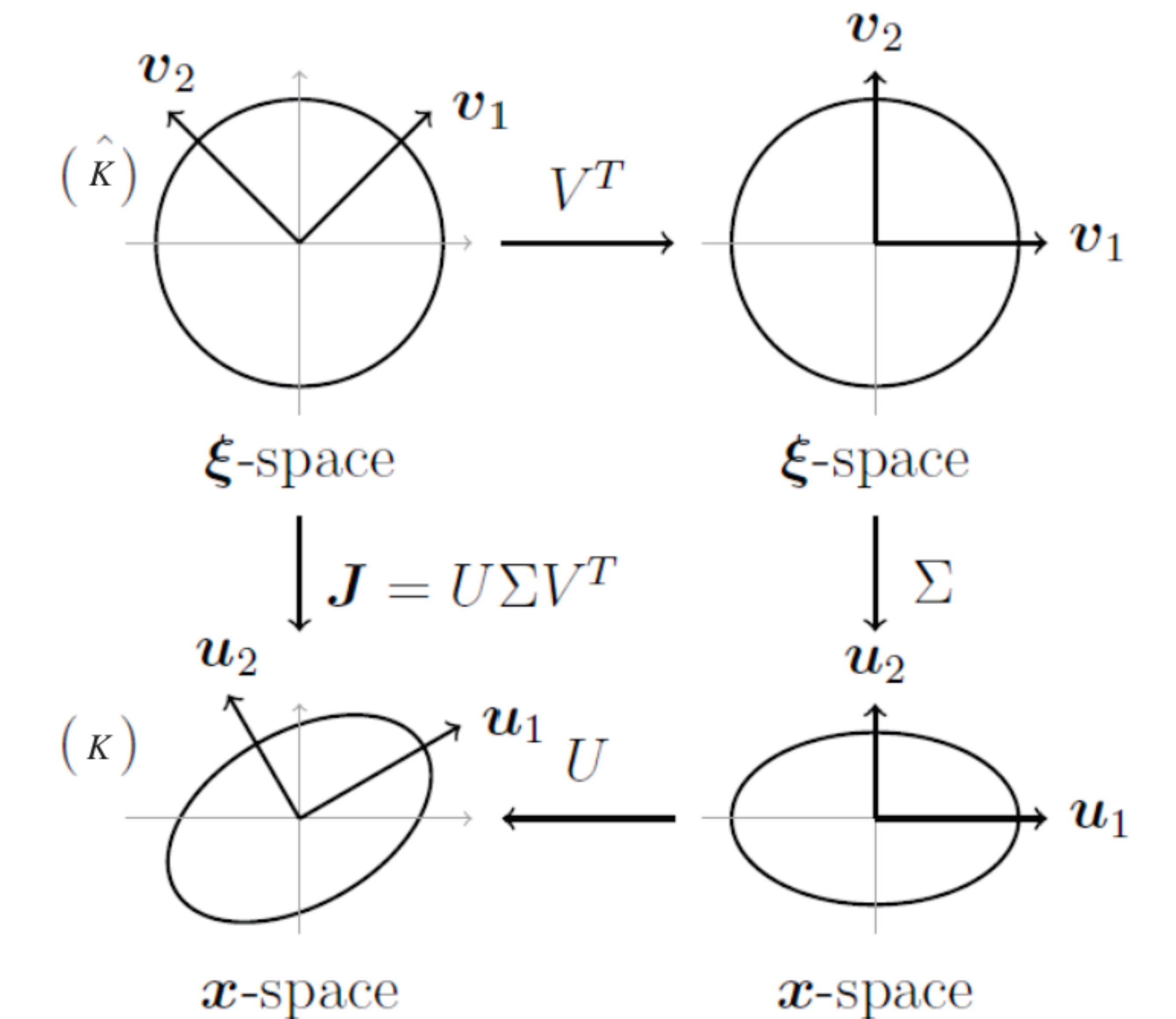}\end{center}
\caption{ \small The 2D mapping of a set  ($\hat{K}$, a circle) in $\Omega_c$, to a physical mesh element ($K$, an ellipse) in $\Omega_p$, under $\mathbf{x}(\boldsymbol{\xi})$. The local anisotropy of the transformation is evident from the degree of compression and stretching of the ellipse.}
\label{fig1:map}
\end{figure}

It immediately follows that we can quantify the size, shape and orientation of an element $K$, in the continuous sense, using the singular values and left singular vectors of $\mathbf{J}$, 
and the eigenvalues and eigenvectors of the associated {\em Metric Tensor} 
\begin{equation}
\mathcal{M}=\mathbf{J}^{-T}\mathbf{J}^{-1}. \label{eqal1}
\end{equation}
In particular, the eigenvectors of $\mathcal{M}$ are $\textbf{e}_1$,$ \textbf{e}_2$ and the eigenvalues $\mu_1$, $\mu_2$,  satisfy
$\mu_i=1/\sigma_{i}^{2}$ for $ i=1,2$ , with 
\begin{eqnarray}
\mathcal{M}=U\Sigma^{-2} U^T=\left[\begin{array}{cc} \mathbf{e_1}&  \mathbf{e_2}\end{array}\right]\left[\begin{array}{cc}\frac{1}{\sigma_1^2} & 0\\ 0 & \frac{1}{\sigma_2^2} \end{array}\right]\left[\begin{array}{c}  \mathbf{e_1}^T\\ \mathbf{e_2}^T\end{array}\right]. \nonumber
\end{eqnarray}
Hence, the circumscribed ellipse of a mesh element will have principal axes in the direction of the eigenvectors $\mathbf{e_1}$ and $\mathbf{e_2}$, with semi-lengths given by the  values $\sigma_1=\sqrt{1/\mu_1}$ and $\sigma_2=\sqrt{1/\mu_2}$, (although we note that in the discrete case the shape, size, and orientation of a mesh element are only partially determined by this metric). 
Accordingly, a useful measure $Q_s$ of the anisotropy of the mesh locally is given by the ratio of $\sigma_1$ and $\sigma_2$. A natural formulation in terms of $\mathbf{J}$ is \begin{equation}
Q_{s}=\frac{\mathrm{tr}(\mathbf{J^TJ})}{2\det(\mathbf{J^TJ})^{1/2}}=\frac{\sigma_1^2+\sigma_2^2}{2{\sigma_1 \sigma_2}}=\frac{1}{2}\left( {\frac{\sigma_1}{\sigma_2}}+  {\frac{\sigma_2}{\sigma_1}}\right). 
\label{qske}
\end{equation}
If the singular values are equal then the mesh is isotropic  and $Q_{s}=1$.
This measure, and the circumscribed ellipse of a mesh element, are extremely useful for visualising and analysing the degree of anisotropy, as we demonstrate later. We note that this is a local measure for anisotropy  \cite{HRBook}, and 
should be considered alongside more global measures of mesh quality such as the Kwok Chen metric \cite{KC00}. 

\subsection{Metric Tensors and locally M-Uniform meshes}
In this section we consider constructing locally anisotropic meshes using a Metric Tensor. If  we define a scaled Metric Tensor
\begin{equation}
\mathrm{\textbf{M}}= \theta \mathcal{M},  \label{eqal1b}
\end{equation}
for some constant $\theta$, then it follows from (\ref{eqal1}) that $\sqrt{\det(\mathrm{\textbf{M}})} J = \theta, $
for all $\mathbf{x}\in\Omega_p$. 
More specifically, if we define the {\em scalar density function} $\rho(\mathbf{x})=\sqrt{\det(\mathrm{\textbf{M}})}>0,$
then all elements have a constant area in the metric $\mathrm{\textbf{M}}$ such that 
\begin{equation}
\rho J=\theta.
\label{eqal2}
\end{equation}
Integrating the expression (\ref{eqal2}) over $\Omega_c$ and applying the change of variable formula it follows immediately that
\begin{equation}
\theta=\int_{\Omega_p}{\rho}  \; d\mathbf{x}/ \int_{\Omega_c}{d}\boldsymbol{\xi},\label{thetadef}
\end{equation}
or equivalently the mesh volume of  $\rho$ is equalised over each mesh cell. 
Equation (\ref{eqal2}) is the well known {\em equidistribution principle} which plays a fundamental role in mesh adaptation, giving direct control over the {\em size},
but not the alignment, of the mesh elements. 

For mesh generation in two or more dimensions the equidistribution principle (\ref{eqal2}) alone is insufficient to determine the  mesh uniquely and additional constraints are required \cite{Simp}.  Methods that augment the equidistribution principle with further local constraints are in
\cite{Baines}, \cite{Simp2}, \cite{H2001}, \cite{H2005}, \cite{Knupp}, and 
other principles for anisotropic mesh adaptation in\cite{steinberg}, \cite{BW:09}, \cite{Finn}.  A  common approach to locally controlled anisotropic mesh generation is to 
define the desired level of anisotropy through the Metric Tensor  $\mathrm{\textbf{M}}$ directly.
Then ${\mathbf M}$ is prescribed and the Jacobian ${\mathbf J}$ of the map is calculated {\bf directly} by enforcing the condition 
 \begin{equation}
Q_{a} \equiv \frac{\mathrm{tr}(\mathbf{J}^{T}\mathbf{M}\mathbf{J})}{2\det(\mathbf{J}^{T}\mathbf{M}\mathbf{J})^{1/2}} = 1.\label{eqal3}
\end{equation}
This extends the anisotropy measure (\ref{qske}) and requires that now
all elements are equilateral with respect to the metric $\mathrm{\textbf{M}}$. This condition allows us to directly control the shape and orientation of a mesh element through an appropriate choice of $\mathrm{\textbf{M}}$. It also follows directly from (\ref{eqal1}) and (\ref{eqal1b}), and is referred to  as the {\em alignment condition} \cite{HRBook}. Huang \cite{H2001} shows that combining the equidistribution and alignment conditions (\ref{eqal2})-(\ref{eqal3}) gives 
\begin{equation}
\mathbf{J}^{-T}\mathbf{J}^{-1}=\theta^{-1}\mathrm{\textbf{M}},\hspace{.2cm}\mbox{or equivalently}\hspace{.2cm}\mathbf{J}^{T}\mathrm{\textbf{M}}\mathbf{J}=\theta I.\label{eqal4a}
\end{equation}
That is, when the coordinate transformation satisfies relation (\ref{eqal4a}), the element size, shape, and orientation are completely determined by $\mathrm{\textbf{M}}$ throughout the domain. The resulting mesh will be aligned to the metric $\mathbf{M}$ and equidistributed with respect to the measure $\rho$, and is referred to as {\em M-uniform} \cite{HRBook}. In general there is no unique solution to (\ref{eqal4a}), and so in practice this condition can only be enforced approximately. The choice of an appropriate Metric Tensor is important to the success of this method, and typically those which lead to low interpolation 
errors are chosen.

The simplest choice is to take a matrix monitor function of the form 
\begin{equation}
\mathrm{\mathbf{M}}=\rho I.\label{smm}
\end{equation}
Using a variational approach this is equivalent to Winslow's variable diffusion method \cite{Wins}. 
In this case, by condition (\ref{eqal4a}), $\mathbf{J^{-T} J^{-1}}$ is a scalar matrix. This  means 
the singular values, and hence the semi-lengths of the circumscribed ellipse of a mesh element, are equal (i.e., it is a circle) if
(\ref{smm}) is exactly satisfied.

In contrast, Huang \cite{H2005} has derived the exact forms of $\mathrm{\textbf{M}}$ for which the resulting mesh minimizes the interpolation error of some underlying function $u$. Piecewise constant interpolation error can be minimised in the L-2 norm if 
\begin{equation}
\mathrm{\textbf{M}}=\kappa_{h,1}[I+\alpha_{h,1}^2 \nabla u\nabla u^T]\label{eqal5}
\end{equation}
where 
$$\alpha_{h,1}=\left({\beta^{-1}}{(1-\beta)\int_{\Omega_p}\| \nabla u\|^{1/2} d\mathbf{x}}\right)^2, \quad \kappa_{h,1}=(1+\alpha_{h,1}^2\nabla u\nabla u^T)^{-1/4},$$
and $\beta$ is a parameter which controls the percentage of mesh points that are concentrated in regions where $\rho$ is large. For piecewise linear interpolation, the optimal
Metric Tensor is given by
\begin{equation}
\mathrm{\textbf{M}}=\kappa_{h,2}[I+\alpha_{h,2}\vert H(u)\vert],\label{eqal6}
\end{equation}
for suitable parameters $\kappa_{h,2}$, and $\alpha_{h,2}$, where $H(u)$ is the Hessian matrix of $u$.

Whilst effective in generating (essentially optimal) anisotropic meshes, these methods require finding the full Jacobian of the map at each step, which necessitates incorporating
extra convexity conditions to ensure uniqueness, making the resulting (typically variational) methods
challenging to implement. In contrast Winslow's method is rather simpler to use. However, such methods that use a  scalar matrix monitor function may well be too restrictive to produce a mesh that is aligned to a physical solution \cite{HRBook}. This begs the question of whether a method that equidistributes a scalar mesh density function is generally capable of producing anisotropic meshes. We demonstrate in the next section that by  combining equidistribution of a \emph{scalar density function} with a {\em global} constraint, namely \emph{optimal transport}, we can produce anisotropic meshes which are relatively easy to compute.
Furthermore, for certain linear features, we are able to derive analytically the precise form of the metric $\mathbf{M}$ to which these meshes align and show it has a similar form to those metrics  given in (\ref{eqal5}) and (\ref{eqal6}) which minimise interpolation error. 

\subsection{Metric Tensors for Mesh generation} We conclude this section with some further observations on general Metric Tensors.
Any symmetric, positive definite Metric Tensor $\mathbf{M}$ with normalised orthogonal eigenvectors $\mathbf{e_1}$ and $\mathbf{e_2}$ and associated
eigenvalues $\mu_1, \mu_2$ can be expressed as
$$
\mathbf{M}= \mu_1\mathbf{e_1}\mathbf{e_1}^T+ \mu_2 \mathbf{e_2} \mathbf{e_2}^T.
$$
Since the identity matrix 
$I=\mathbf{e_1}\mathbf{e_1}^T+ \mathbf{e_2} \mathbf{e_2}^T,$
it follows that 
$$
\mathbf{M}=\mu_2 [I+(\mu_1/\mu_2 -1) \mathbf{e}_1 \mathbf{e_1}^T].
$$
We now consider some special cases. If $\mu_1=\mu_2=\rho(\mathbf{x})$ then
we obtain the scalar matrix valued monitor function (\ref{smm}).
If instead $\mu_2=1/ \mu_1$ then
\begin{eqnarray}
\mathbf{M}={\mu_1}^{-1}[I+(\mu_1^2-1) \mathbf{e_1}\mathbf{e_1}^T]. 
\end{eqnarray}
With a variational approach, this is equivalent to the method based upon harmonic maps \cite{HRBook}.  
The mesh adaptation occurs mainly in the directions of $\mathbf{e_1}$ and
its orthogonal complement. For an underlying function $u(x,y)$ if we define $\mathbf{e}_1= \nabla u/ \| \nabla u\|$, $\mu_1=\sqrt{1+\| \nabla u\|^2}$, and $\mu_2=1/\mu_1$,
then
\[
\mathbf{M}=({\sqrt{1+\| \nabla u\|^2}})^{-1} [I+ \nabla u \nabla u ^T].
\]
For problems in which $u$ has steep fronts or even discontinuities then coordinate line
compression and expansion occur mainly in the gradient direction, since $\mu_1$ and $\mu_2$ change much faster in the gradient direction than the
tangential direction. If there is no change in the gradient of the solution along the tangential direction then we may choose $\mu_2=1$, in which case we obtain the arc-length like matrix monitor function
$
\mathbf{M}= [I+ \nabla u \nabla u ^T]^{1/2},
$
which is a rescaling of the expression (\ref{eqal5}).
Dvinsky \cite{Dvi} uses the more general metric
\begin{eqnarray}
\mathbf{M}&=&[I+f(\Psi){\nabla \Psi \nabla \Psi^{T}}/{\| \nabla \Psi \|^2}],\label{dvinsky}
\end{eqnarray}
to obtain a mesh with good alignment and concentration around a curve defined by $\Psi(\mathbf{x})=0$, where $f(\Psi)$ is a function of the distance from a given point to this curve that increases as the distance tends to zero. The eigenvalues of $\mathbf{M}$ are then $\mu_1=1+f(\Psi)$ and $\mu_2=1$, and the corresponding eigenvectors are $\mathbf{e}_1={\nabla \Psi}/{\| \nabla \Psi \|}$ and it's orthogonal complement. Therefore mesh cells have a shorter length in the $\nabla \Psi$ direction whenever $f(\Psi)$ changes rapidly.  Since the density function $\rho=\sqrt{\det{\mathbf{M}}}=\sqrt{1+f(\Psi)},$
it follows that $\mu_1=\rho^2$ and $\mu_2=1$, and we can rewrite (\ref{dvinsky}) as 
\begin{eqnarray}
\mathbf{M}&=&[I+(\rho^2-1){\nabla \Psi \nabla \Psi^{T}}/{\| \nabla \Psi \|^2}].\label{dvinsky2}
\end{eqnarray}
If  $\Psi(\mathbf{x})=u(\mathbf{x})$, and $f(\Psi)=\| \nabla u \|^2$ (which corresponds to $\rho=\sqrt{1+\| \nabla u \|^2}$),  then (\ref{dvinsky}) and (\ref{dvinsky2}) are equivalent to the matrix monitor function 
$
\mathbf{M}=[I+ \nabla u \nabla u ^T].
$
\section{Mesh redistribution using global constraints and the Monge-Amp\`ere equation} As stated in Section 2, the local scalar equidistribution condition (\ref{eqal2}) does not uniquely define a mesh and needs to be augmented by additional constraints.
In contrast to the last section, we consider here the use of {\em global constraints} to define the mesh, viz., we use {\em Optimal Transport Regularisation}.
Instead of enforcing local structure, we seek to find a mesh
as close as possible (in a suitable norm) to a uniform one, consistent with satisfying (\ref{eqal2}).
In this section we describe such a method, show how to calculate the associated Metric Tensor ${\mathbf M}$, and subsequently examine their excellent
alignment properties.

We consider the mesh as defined in terms of an appropriate map (representing the limiting case as the mesh is refined) and use a global constraint requiring this map
to be close to the identity.
\begin{definition}
An optimally equidistributed mapping $\mathbf{x}({\mathbf \xi})$ is one which minimizes the functional $I_{2}$, where
\[
I_2 =\int_{\Omega_c}\vert \mathbf{x} (\boldsymbol{\xi}) - \boldsymbol{\xi}\vert^2d\mathbf{x},
\]
over all invertible $\mathbf{x} (\boldsymbol{\xi})$ for which the equidistribution condition (\ref{eqal6}) also holds.
\end{definition}\\
The following result gives both the existence and uniqueness of such a map and a means to calculate it.
\begin{theorem} (Brenier, Cafferelli) There exists a unique optimal
mapping $\mathbf{x}(\boldsymbol{\xi})$ satisfying the equidistribution condition (\ref{eqal2}). This map has the same regularity as $\rho$. Furthermore,
the map $\mathbf{x}(\boldsymbol{\xi})$ can be written as the gradient
(with respect to $\boldsymbol{\xi}$) of a unique (up to constants) convex mesh potential $P(\boldsymbol{\xi}, t)$, so that
\[
\mathbf{x}( \boldsymbol{\xi}) = \nabla_{\xi}P(\boldsymbol{\xi}), \hspace{1cm} \Delta_{\xi}P(\boldsymbol{\xi}) > 0.
\]
\end{theorem}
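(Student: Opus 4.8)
The plan is to recognise this as the classical Brenier--Caffarelli theory of optimal transportation for the quadratic cost, assembled in three stages: existence of a minimiser via the Kantorovich relaxation, identification of that minimiser as the gradient of a convex potential, and the Monge--Amp\`ere equation together with the regularity claim. For the first stage I would pass from the Monge problem of Definition~1 to its Kantorovich relaxation: minimise $\int_{\Omega_c\times\Omega_p}|\mathbf{x}-\boldsymbol{\xi}|^2\,d\gamma(\boldsymbol{\xi},\mathbf{x})$ over all transport plans $\gamma$ whose first marginal is normalised Lebesgue measure on $\Omega_c$ and whose second marginal is the probability measure $\rho\,d\mathbf{x}/\int_{\Omega_p}\rho$ on $\Omega_p$; note that the equidistribution condition (\ref{eqal2}) says precisely that an admissible $\mathbf{x}(\boldsymbol{\xi})$ pushes the first marginal onto the second. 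The set of such plans is tight, hence weak-$*$ precompact by Prokhorov's theorem, and the cost functional is weak-$*$ lower semicontinuous because the integrand is continuous and bounded on the compact product domain; the direct method then produces an optimal plan $\gamma^\star$.

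Next I would show that $\gamma^\star$ is carried by the graph of a map that is the gradient of a convex function. A standard perturbation argument --- rearranging mass along finite cycles cannot decrease the cost of an optimal plan --- shows that $\mathrm{supp}\,\gamma^\star$ is cyclically monotone, and by Rockafellar's theorem any cyclically monotone set is contained in the subdifferential $\partial P$ of a convex function $P$ on $\Omega_c$. Since the first marginal is absolutely continuous, $P$ is differentiable Lebesgue-almost everywhere, so $\partial P(\boldsymbol{\xi})=\{\nabla P(\boldsymbol{\xi})\}$ a.e.\ and $\gamma^\star=(\mathrm{id},\nabla P)_{\#}(\text{Leb})$; hence the optimiser is the map $\mathbf{x}(\boldsymbol{\xi})=\nabla_\xi P(\boldsymbol{\xi})$, which in particular solves the original Monge problem. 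Uniqueness of the optimal map (a.e.) and of $P$ up to an additive constant follows from strict convexity of $|\cdot|^2$: the average of two distinct optimal maps would have strictly smaller cost, a contradiction.

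Finally I would extract the Monge--Amp\`ere equation and treat regularity. Applying the Jacobian change-of-variables formula to $(\nabla P)_{\#}(\text{Leb})=\rho\,d\mathbf{x}/\int\rho$, and invoking Alexandrov's theorem that a convex function is twice differentiable a.e., gives $\det\!\big(D^2P(\boldsymbol{\xi})\big)\,\rho\big(\nabla P(\boldsymbol{\xi})\big)=\theta$ in the weak (Alexandrov/Brenier) sense --- a Monge--Amp\`ere equation for $P$. The assertion that the map is as regular as $\rho$ is Caffarelli's regularity theory: with $\Omega_p$ convex and $\rho$ pinched between two positive constants, $P$ is strictly convex and $C^{1,\alpha}_{\mathrm{loc}}$, and an interior Schauder bootstrap for the Monge--Amp\`ere operator upgrades this to $P\in C^{k+2,\alpha}$ whenever $\rho\in C^{k,\alpha}$, so $\mathbf{x}=\nabla P$ matches the smoothness of $\rho$. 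Positivity of $\Delta_\xi P$ is then immediate: since $\theta/\rho$ is bounded away from zero, $D^2P$ is positive definite rather than merely semidefinite, so $\Delta_\xi P=\mathrm{tr}(D^2P)>0$.

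I expect the regularity half --- Caffarelli's strict-convexity result and the interior $C^{2,\alpha}$ estimates for the Monge--Amp\`ere operator --- to be the genuine obstacle; the existence, gradient structure, and uniqueness are comparatively soft and follow the standard optimal-transport route, whereas the regularity is hard elliptic PDE which in a paper of this scope one cites rather than reproves.
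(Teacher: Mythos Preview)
Your sketch is a faithful outline of the standard Brenier--Caffarelli argument and is essentially correct as far as it goes. However, you should be aware that the paper does not prove this theorem at all: it is stated without proof as a classical result, with attribution to Brenier and Caffarelli and implicit reference to the bibliography entries \cite{Bren} and \cite{Caff}. The paper's purpose is to \emph{use} the theorem to study alignment properties of Monge--Amp\`ere meshes, not to redevelop optimal transport theory. So there is nothing to compare your proposal against; you have written a proof where the authors simply cite one. Your closing remark is exactly right in spirit: in a paper of this scope the regularity theory is cited rather than reproved, and indeed so is the existence theory.
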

It is immediate that if $\mathbf{x} = \nabla_{\xi}P$ then the Jacobian matrix ${\mathbf J}$ is {\em symmetric} and is the Hessian matrix of $P$, i.e.,
\begin{eqnarray}
\mathbf{J}=\mathbf{J}^T=\left[\begin{array}{cc} x_{\xi}& x_{\eta}\\ y_{\xi} & y_{\eta}\end{array}\right] =\left[\begin{array}{cc} P_{\xi\xi}& P_{\xi\eta}\\ P_{\eta\xi} & P_{\eta\eta}\end{array}\right]=:\mathbf{H}(P). \nonumber
\end{eqnarray}
Furthermore, the Jacobian determinant $ J$ is the Hessian determinant of $P$ such that
\[
{J}=x_{\xi}y_{\eta}-x_{\eta}y_{\xi}=P_{\xi\xi}P_{\eta\eta}-P_{\xi\eta}^2:=H(P).
\]
The equidistribution condition (\ref{eqal2}) thus becomes 
\begin{equation}
\rho(\nabla P)  H(P) =\theta, \label{pmaeqa1}
\end{equation}
which is the  {\em Monge-Amp\`ere equation}.  This fully nonlinear equation is generally augmented with Neumann or periodic boundary conditions, where the boundary of $\Omega_c$
is mapped to the boundary of $\Omega_p$.
The gradient of $P$  thereby gives the unique map $\mathbf{x}$.
Methods to solve (\ref{pmaeqa1}) are described in \cite{BW:09},\cite{Finn}, and form the basis of effective and robust mesh redistribution algorithms in two and three dimensions \cite{Pab}.
These methods have several advantages in practical applications. In particular, they only involve solving scalar equations, they deal naturally with
complex boundaries, and they can be easily coupled to existing software for solving PDEs \cite{walsh} and also for operational data assimilation  \cite{culpic}.

Regions where $\rho$ is large will result in small mesh elements and vice versa. However, it is not immediately clear what 
control one has through the choice of $\rho$ over the shape and orientation of the elements. 
To study this we use the following result in which we uniquely determine the Metric Tensor of the resulting map.
\begin{lemma} For a given $\rho(\mathbf{x})$ and solution of (\ref{pmaeqa1}), the corresponding mesh determines a unique Metric Tensor 
${\mathbf M}$, for which $\rho = \sqrt{\det({\mathbf M})}$.\end{lemma}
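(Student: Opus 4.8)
The plan is to establish \emph{existence} of $\mathbf{M}$ by an explicit construction and then \emph{uniqueness} by showing the mesh already pins $\mathbf{M}$ down up to a single scalar, which the density normalisation then fixes. By the preceding theorem of Brenier and Caffarelli the optimal map is $\mathbf{x}(\boldsymbol{\xi})=\nabla_\xi P$ with $P$ convex and unique up to an additive constant, so its Jacobian $\mathbf{J}=\mathbf{H}(P)$ is symmetric, positive definite, with $J=H(P)>0$, and is uniquely determined at every point of $\Omega_c$. Define
\[
\mathbf{M}\;:=\;\theta\,\mathbf{J}^{-T}\mathbf{J}^{-1}\;=\;\theta\,\mathbf{H}(P)^{-2},
\]
with $\theta$ given by (\ref{thetadef}); this is symmetric positive definite, hence a genuine Metric Tensor, and it is uniquely determined because $\mathbf{J}$ is. Taking determinants and using $\det\mathbf{J}=J$ gives $\det\mathbf{M}=\theta^2/J^2$, and the equidistribution condition (\ref{eqal2}) --- equivalently the Monge--Amp\`ere equation (\ref{pmaeqa1}) --- $\rho\,J=\theta$ yields $\det\mathbf{M}=\rho^2$, so $\sqrt{\det\mathbf{M}}=\rho$ on taking the positive root. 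One also reads off $\mathbf{J}^T\mathbf{M}\mathbf{J}=\theta I$, i.e.\ (\ref{eqal4a}) holds, so this $\mathbf{M}$ is exactly the metric for which the generated mesh is M-uniform.

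For uniqueness I would make precise the phrase ``the mesh determines $\mathbf{M}$'' through the alignment condition (\ref{eqal3}): a metric to which the mesh aligns is one for which $Q_a=1$, which holds iff the symmetric positive-definite matrix $\mathbf{J}^T\mathbf{M}\mathbf{J}$ has both eigenvalues equal, i.e.\ $\mathbf{J}^T\mathbf{M}\mathbf{J}=c(\mathbf{x})\,I$ for some $c>0$. Rearranging gives $\mathbf{M}=c(\mathbf{x})\,\mathbf{J}^{-T}\mathbf{J}^{-1}$, so the mesh already fixes the eigenvectors of $\mathbf{M}$ (the left singular vectors $\mathbf{e}_1,\mathbf{e}_2$ of $\mathbf{J}$) and the ratio of its eigenvalues, leaving only the positive scalar field $c$ free. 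Then $\sqrt{\det\mathbf{M}}=c/J$, and imposing $\rho=\sqrt{\det\mathbf{M}}$ forces $c=\rho J$, which by (\ref{eqal2}) equals the constant $\theta$. Hence $\mathbf{M}=\theta\,\mathbf{J}^{-T}\mathbf{J}^{-1}$, the metric constructed above, and it is the only one.

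The computation itself is routine; the only real content is the interpretive step --- recognising that ``determined by the mesh'' means determined via alignment, hence up to one scalar degree of freedom --- together with the observation that the Monge--Amp\`ere structure $\rho\,H(P)=\theta$ is precisely what makes that scalar a genuine \emph{constant}, so the resulting $\mathbf{M}$ is not merely aligned to, but M-uniform for, the mesh. A minor point to keep in mind is that the well-posedness making $\mathbf{J}$ unique (the Neumann/periodic conditions mapping $\partial\Omega_c$ to $\partial\Omega_p$) is already absorbed into the cited Brenier--Caffarelli theorem. Finally, since $\mathbf{J}=\mathbf{H}(P)$ is symmetric, $\mathbf{M}=\theta\,\mathbf{H}(P)^{-2}$ has the same eigenvectors as $\mathbf{H}(P)$, with eigenvalues $\theta/\sigma_i^2$; this is the representation exploited in Section~3 to compute $\mathbf{M}$ explicitly for linear features.
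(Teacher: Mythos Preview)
Your proof is correct and follows essentially the same construction as the paper: define $\mathbf{M}=\theta\,\mathbf{J}^{-T}\mathbf{J}^{-1}=\theta\,\mathbf{H}(P)^{-2}$, invoke uniqueness of $P$ (hence of $\mathbf{J}$), and verify $\sqrt{\det\mathbf{M}}=\rho$ from the Monge--Amp\`ere relation $\rho J=\theta$. The paper's own argument stops there, treating uniqueness as simply ``$P$ unique $\Rightarrow$ $\mathbf{J}$ unique $\Rightarrow$ the defined $\mathbf{M}$ unique''; your additional paragraph, arguing that any metric satisfying the alignment condition $Q_a=1$ must have the form $c(\mathbf{x})\mathbf{J}^{-T}\mathbf{J}^{-1}$ and that the density constraint then forces $c\equiv\theta$, is a welcome sharpening that makes explicit what ``the mesh determines $\mathbf{M}$'' should mean, but it is an elaboration rather than a different route.
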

 \begin{proof} Given $\rho(\mathbf{x})$, the Monge-Amp\'ere equation (\ref{pmaeqa1}) has a unique solution $P$. Hence we may uniquely construct
the Jacobian matrix  ${\mathbf J} = {\mathbf H}(P)$ and Metric Tensor ${\mathbf M} = \theta {\mathbf J}^{-1} {\mathbf J}^{-T}.$ Since 
$J \sqrt{\det(M)} = \theta = \rho J$ from (\ref{pmaeqa1}), the result follows. \qquad\end{proof}

We can calculate the explicit form of ${\mathbf M}$ as follows: Since ${\mathbf J}$ is {\em symmetric} its eigenvalues $\lambda_1,\lambda_2$ are equal to its singular values 
$\sigma_1,\sigma_2$ and
its (unit) eigenvectors ${\mathbf e}_1$ and ${\mathbf e}_2$ are orthogonal. It can therefore be expressed in the form
$$
{\mathbf J} = \lambda_1 {\mathbf e}_1 {\mathbf e}_1^T + \lambda_2 {\mathbf e}_2 {\mathbf e}_2 ^T,
$$
implying $\rho={\theta}/{J}={\theta}/{\lambda_1 \lambda_2}.$ It follows from (\ref{eqal4a}) that  the Metric Tensor ${\mathbf M}$ for which the mesh is M-uniform has the same (unit) orthogonal eigenvectors ${\mathbf e}_1$ and ${\mathbf e}_2$,
and eigenvalues $\mu_1 = {\theta}/{\lambda_1^2}, \mu_2 = {\theta}/{\lambda_2^2}$
and can be expressed in the form 
\begin{equation}
{\mathbf M} = \theta \left( \lambda_1^{-2}  {\mathbf e}_1 {\mathbf e}_1^T + \lambda_2^{-2}  {\mathbf e}_2 {\mathbf e}_2 ^T \right).
\label{cbjac2}
\end{equation}
Observe that this Metric Tensor is not (necessarily) a scalar multiple of the identity matrix. In Section 4 we study the (local) alignment properties of this Metric Tensor
determined by the global constraint of minimising $I_2$.

\section{Alignment to a linear feature}
\subsection{Construction of an exact map} If the scalar density $\rho(\mathbf{x})$ is concentrated along
linear features, characterised by orthogonal vectors ${\mathbf e}_1$ and ${\mathbf e}_2$, the Monge Ampere equation can be solved exactly. For the 
resulting mapping we shall show that the Metric Tensor ${\mathbf M}$ satisfies
(\ref{cbjac2}), implying the mesh aligns along the linear features. 

Consider the case where $\Omega_c = \Omega_p = [0,1]^2$ and the solution to (\ref{pmaeqa1}) 
is a {\em doubly-periodic} map from $\Omega_c\rightarrow\Omega_p$, such that $\boldsymbol{\xi}=[\xi,\eta]\in\Omega_c$, $\mathbf{x}=[x, y]\in\Omega_p$. To characterise linear features we introduce orthogonal unit vectors ${\mathbf e}_1$ and ${\mathbf e}_2$
and consider a doubly-periodic (in ${\mathbf x}$) scalar density function of the form
\begin{eqnarray}
\rho(\mathbf{x})=\rho_1(\mathbf{x}\cdot \mathbf{e_1})\rho_2(\mathbf{x}\cdot \mathbf{e_2}):=\rho_1(x^{\prime})\rho_2(y^{\prime})\nonumber
\end{eqnarray}
where $\mathbf{e_1}=\left[\begin{array}{c}a\\ b\end{array}\right],
\mathbf{e_2}=\left[\begin{array}{c}-b\\ a\end{array}\right], \quad a^2 + b^2 = 1.$
Assume furthermore that the periodic function $\rho_1$ is large when $\mathbf{x}\cdot \mathbf{e_1}=c$, and the periodic function $\rho_2$ is large when $\mathbf{x}\cdot \mathbf{e_2}=d$,  for given constants $c$, and  $d$, and they are
small (close  to $1$) otherwise. Our motivation for this choice of density function $\rho$ is that the solution of the equidistribution equation (\ref{eqal1}) would be expected to concentrate mesh points 
along the lines given by either of the conditions  $\mathbf{x}\cdot \mathbf{e_1}=c$, or $\mathbf{x}\cdot \mathbf{e_2}=d$. To deduce properties of the 
mesh alignment we must solve the full Monge-Amp\`ere equation, with $\theta$ calculated as below.

 \begin{lemma} The value of $\theta$ is given by
$$
\theta = \theta_1 \theta_2, \quad \mbox{where} \quad \theta_1 = \int_0^1 \rho_1(s) \; ds, \quad \mbox{and} \quad \theta_2 = \int_0^1 \rho_2(s) \; ds. 
$$
\end{lemma}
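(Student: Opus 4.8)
The plan is to convert $\theta$ into an ordinary integral of $\rho$ and then exploit the separable, periodic structure of the density. Since $\Omega_c=[0,1]^2$ forces $\int_{\Omega_c}d\boldsymbol{\xi}=1$, the defining identity (\ref{thetadef}) reduces to
\[
\theta=\int_{\Omega_p}\rho(\mathbf{x})\,d\mathbf{x}=\int_{[0,1]^2}\rho_1(\mathbf{x}\cdot\mathbf{e_1})\,\rho_2(\mathbf{x}\cdot\mathbf{e_2})\,d\mathbf{x},
\]
and because $\rho$ is doubly periodic in $\mathbf{x}$ with period cell $[0,1]^2$, this common value equals the mean value of $\rho$ over the whole plane, $\theta=\lim_{R\to\infty}\frac{1}{\pi R^2}\int_{B_R}\rho\,d\mathbf{x}$, where $B_R$ is the disc of radius $R$. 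This reformulation is the key move, since the mean value is manifestly invariant under rotation of the coordinates.

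Next I would pass to the rotated orthonormal coordinates $x'=\mathbf{x}\cdot\mathbf{e_1}=ax+by$ and $y'=\mathbf{x}\cdot\mathbf{e_2}=ay-bx$. Since $a^2+b^2=1$ this is a rotation: it has unit Jacobian, it maps each disc $B_R$ onto itself, and it turns the integrand into the genuinely separable function $\rho_1(x')\rho_2(y')$, so that
\[
\theta=\lim_{R\to\infty}\frac{1}{\pi R^2}\int_{B_R}\rho_1(x')\rho_2(y')\,dx'\,dy'.
\]
It then remains to show this limit equals $\theta_1\theta_2$, which is the standard fact that the plane-average of a bounded, separable, $1$-periodic function factorises into the product of its one-dimensional averages: covering $B_R$ by the unit cells $[m,m{+}1]\times[n,n{+}1]$ in the $(x',y')$-plane, every cell lying entirely inside $B_R$ contributes exactly $\big(\int_0^1\rho_1\big)\big(\int_0^1\rho_2\big)=\theta_1\theta_2$; there are $\pi R^2+\mathrm{O}(R)$ such cells, and the $\mathrm{O}(R)$ cells meeting $\partial B_R$ contribute $\mathrm{O}(R)$ because $\rho_1$ and $\rho_2$ are bounded. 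Dividing by $\pi R^2$ and letting $R\to\infty$ gives $\theta=\theta_1\theta_2$.

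The one genuine subtlety is the mismatch between domain and integrand: the rotation that renders the integrand separable simultaneously tilts the square $[0,1]^2$, so a bare application of Fubini on the transformed domain is not available. Passing to the rotation-invariant plane-average — legitimate precisely because $\rho$ is doubly periodic, hence equidistributed over the plane — is what circumvents this, and it is the only point in the argument requiring any care. When $\mathbf{e_1}$ is axis-aligned the tilt disappears and the statement collapses to an immediate Fubini computation, $\theta=\int_0^1\int_0^1\rho_1(x)\rho_2(y)\,dx\,dy=\theta_1\theta_2$; the averaging argument is needed only to accommodate linear features at a general angle. Everything else in the proof is routine.
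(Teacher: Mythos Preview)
Your argument is correct and follows the same skeleton as the paper's proof: reduce $\theta$ to $\int_{[0,1]^2}\rho\,d\mathbf{x}$, pass to the rotated orthonormal coordinates $(x',y')$ with unit Jacobian, and then use periodicity to replace the integral over the tilted image of $[0,1]^2$ by $\int_0^1\int_0^1\rho_1(x')\rho_2(y')\,dx'\,dy'$. The paper compresses this last step into the single phrase ``exploiting the double-periodicity of the function $\rho$'', whereas you make it explicit by passing through the rotation-invariant plane-average and a cell-counting argument; your remark about the tilted-square obstruction to a direct Fubini computation is precisely the content the paper leaves implicit. So the route is essentially the same, with your version supplying the justification the paper elides.
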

 \begin{proof} By the definition in expression (\ref{thetadef}) 
$$\theta = \int_{\Omega_p} \rho({\mathbf x})  \; dx/\int_{\Omega_c} \; d\xi = \int_0^1 \int_0^1 \rho_1({\mathbf x}\cdot{\mathbf e}_1) \rho_2({\mathbf x}\cdot{\mathbf e}_2) \; dx dy/ \int_0^1 \int_0^1 \; d\xi d\eta.$$
We now introduce new coordinates $x'$ and $y'$ given by $x' = {\mathbf x}\cdot{\mathbf e}_1, \quad y'={\mathbf x}\cdot{\mathbf e}_2.$
As  $\mathbf{e_1}$ and $\mathbf{e_2}$ are orthonormal it follows immediately that $dx \; dy = dx' \; dy'$, so exploiting the double-periodicity of the function $\rho$  we may rewrite the integral as
$$\theta = \int_0^1 \int_0^1 \rho_1(x') \rho_2(y') \; dx' dy' =  \int_0^1 \rho_1(x') \; dx'  \int_0^1 \rho_2(y') \; dy' = \theta_1 \theta_2.$$ \qquad\end{proof}

Note that the Monge-Amp\`ere equation (\ref{pmaeqa1}) can be expressed in the form
\begin{eqnarray}
H(P) \;  \rho_1(x^{\prime}) \rho_2(y^{\prime}) = \theta_1 \theta_2. \label{MAls}
\end{eqnarray}
Remarkably, this fully nonlinear PDE is separable and has an exact solution.
\begin{lemma}For appropriate periodic functions $F(t)$ and $G(t)$ there exists a doubly-periodic, separable solution to (\ref{MAls}) of the form
\begin{equation}
P(\xi,\eta)=F(\boldsymbol{\xi}\cdot\mathbf{e_1})+G(\boldsymbol{\xi}\cdot\mathbf{e_2}).\label{VSS}
\end{equation}
Furthermore, this solution is unique up to an arbitrary constant of addition.
\end{lemma}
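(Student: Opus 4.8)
The plan is to substitute the separable ansatz (\ref{VSS}) into (\ref{MAls}) and exploit the rotational symmetry of the construction. Write $\xi' = \boldsymbol{\xi}\cdot\mathbf{e_1} = a\xi + b\eta$ and $\eta' = \boldsymbol{\xi}\cdot\mathbf{e_2} = -b\xi + a\eta$. With $P = F(\xi') + G(\eta')$ a short computation gives $P_\xi = aF'(\xi') - bG'(\eta')$ and $P_\eta = bF'(\xi') + aG'(\eta')$, so that the components of $\mathbf{x} = \nabla_\xi P$ satisfy $x' = \mathbf{x}\cdot\mathbf{e_1} = (a^2+b^2)F'(\xi') = F'(\xi')$ and $y' = \mathbf{x}\cdot\mathbf{e_2} = (a^2+b^2)G'(\eta') = G'(\eta')$. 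Differentiating once more and repeatedly using $a^2+b^2 = 1$, the entries of the Hessian $\mathbf{H}(P)$ collapse so that $H(P) = F''(\xi')\,G''(\eta')$. Hence (\ref{MAls}) takes the form
\[
\big(F''(\xi')\,\rho_1(F'(\xi'))\big)\,\big(G''(\eta')\,\rho_2(G'(\eta'))\big) = \theta_1\theta_2,
\]
which is solved if we require each factor to equal the corresponding constant, namely
\[
F''(t)\,\rho_1(F'(t)) = \theta_1, \qquad G''(t)\,\rho_2(G'(t)) = \theta_2.
\]

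Next I would integrate these two decoupled equations (each first order in $F'$, resp. $G'$). Setting $R_1(w) := \int_0^w \rho_1(s)\,ds$, one integration gives $R_1(F'(t)) = \theta_1 t + c_1$; since $\rho_1 > 0$ the map $R_1$ is a strictly increasing diffeomorphism of $\mathbb R$ (as smooth as $\rho_1$), so it inverts to give $F'(t) = R_1^{-1}(\theta_1 t + c_1)$, and a further integration produces $F$. The function $G$ is obtained identically from $\rho_2$. Convexity is automatic: $F'' = \theta_1/\rho_1(F') > 0$ and $G'' = \theta_2/\rho_2(G') > 0$, so $H(P) > 0$ and $\Delta_\xi P = (a^2+b^2)(F'' + G'') > 0$, in agreement with the Brenier--Caffarelli theorem, and the full Hessian $\mathbf{H}(P)$ is positive definite so $P$ is genuinely convex.

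The step I expect to require the most care is verifying the doubly-periodic structure. Using the periodicity of $\rho_1$ (period $1$, so that $R_1(w+1) = R_1(w) + \theta_1$ by the normalisation $\theta_1 = \int_0^1\rho_1$) one obtains $R_1^{-1}(z + \theta_1) = R_1^{-1}(z) + 1$, hence $F'(t+1) = F'(t) + 1$; thus $F'(t) - t$ is periodic, and likewise $G'(t) - t$. This is exactly the condition for $\mathbf{x} = \nabla_\xi P$ to have the correct periodic structure relative to the lattice of the feature (equivalently, $P$ minus its quadratic part is periodic up to a linear term, the latter being an admissible translation of $\Omega_p$); the constants $c_1,c_2$ are then fixed by requiring $F'$ and $G'$ to sweep out intervals of length one, i.e.\ that the image is $[0,1]^2$.

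Finally, uniqueness up to an additive constant follows from the Brenier--Caffarelli theorem quoted earlier: since the $P$ constructed above is convex with $\nabla_\xi P$ satisfying the equidistribution condition, it is the potential of the unique optimal map, which is determined modulo constants. Equivalently one can argue directly that the image-normalisation pins down $c_1$ and $c_2$, while the two remaining integration constants of $F$ and $G$ enter $P = F+G$ only through their sum.
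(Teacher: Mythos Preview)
Your proposal is correct and follows essentially the same route as the paper: compute $H(P)=F''G''$ from the ansatz, identify $x'=F'(\xi')$ and $y'=G'(\eta')$, reduce (\ref{MAls}) to two decoupled one-dimensional equidistribution ODEs, integrate via the antiderivatives $R_1,R_2$, and invoke uniqueness of the Monge--Amp\`ere potential. The only cosmetic difference is that the paper introduces a separation constant $\alpha$ in $F''\rho_1(F')=\theta_1\alpha$, $G''\rho_2(G')=\theta_2/\alpha$ and then shows the periodic boundary data force $\alpha=1$ and $c_1=0$, whereas you jump directly to $\alpha=1$; since only existence of a separable solution is at stake this shortcut is harmless, and your additional remarks on convexity and on the identity $R_1(w+1)=R_1(w)+\theta_1$ make the periodicity verification more explicit than in the paper.
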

\begin{proof} Differentiating (\ref{VSS}) with respect to $\xi$ and $\eta$ gives
\begin{eqnarray}
\mathbf{x}=\nabla_{\xi}P=\mathbf{e_1}^TF^{\prime}+\mathbf{e_2}^TG^{\prime}.\label{mapls}
\end{eqnarray}
Differentiating again with respect to $\xi$ and $\eta$ we obtain
\begin{equation}
P_{\xi\xi} = a^2 F^{\prime\prime}+b^2 G^{\prime\prime}, \quad 
P_{\xi\eta } = ab F^{\prime\prime}-ab G^{\prime\prime}, \quad
P_{\eta\eta} = b^2 F^{\prime\prime}+a^2 G^{\prime\prime}.\nonumber
\end{equation}
Hence 
\[
\mathbf{H}(P) =\left[\begin{array}{cc}\mathbf{e_1}&\mathbf{e_2} \end{array}\right]\left[\begin{array}{cc}F^{\prime\prime}&0 \\ 0&G^{\prime\prime}\end{array}\right]\left[\begin{array}{c}\mathbf{e_1}^T\\ \mathbf{e_2}^T\end{array}\right]
\]
and
\begin{eqnarray}
H(P)&=&(a^2F^{\prime\prime}+b^2G^{\prime\prime})(b^2F^{\prime\prime}+a^2G^{\prime\prime})-(abF^{\prime\prime}-abG^{\prime\prime})^2\nonumber\\
&=&(b^2+a^2)^2F^{\prime\prime}G^{\prime\prime}=F^{\prime\prime}G^{\prime\prime}.\label{HessVSS}
\end{eqnarray}
Substituting (\ref{HessVSS}) into the Monge Ampere equation (\ref{MAls}) we obtain
\[
F^{\prime\prime}(\xi^{\prime})G^{\prime\prime}(\eta^{\prime}) \; \rho_1(x') \rho_2(y') = \theta_1 \theta_2,
\]
where $\xi' = \boldsymbol{\xi}\cdot\mathbf{e_1}$ and $\eta' = \boldsymbol{\xi}\cdot\mathbf{e_2}.$
Now by (\ref{mapls}) it follows that
\[
x^{\prime}=\mathbf{x}\cdot \mathbf{e_1}=\mathbf{e_1}^T\cdot \mathbf{e_1}F^{\prime}+\mathbf{e_2}^T\cdot \mathbf{e_1}G^{\prime}=F^{\prime}(\xi^{\prime}),
y^{\prime}=\mathbf{x}\cdot \mathbf{e_2}=\mathbf{e_1}^T\cdot \mathbf{e_2}F^{\prime}+\mathbf{e_2}^T\cdot \mathbf{e_2}G^{\prime}=G^{\prime}(\eta^{\prime}).
\]
Thus, there is a solution of (\ref{MAls}) of the form (\ref{VSS}) provided $F$ and $G$ satisfy 
\begin{equation}
{
F^{\prime\prime}(\xi^{\prime}) \rho_1(F'(\xi')) =\theta_1 \alpha \quad \mbox{and} \quad  G^{\prime\prime}(\eta^{\prime}) \rho_2(G'(\eta')) =\theta_2/\alpha, 
}
\label{EVfg}
\end{equation}
where $\alpha$ is (at this stage) an arbitrary constant. From the identities $x' = F'$ and $y' = G'$ it follows that 
$x'(\xi') \rho_1(x'(\xi')) = \theta_1 \alpha$ and for a suitable constant $c_1$, $R_1(x') \equiv \int_0^{x'}  \rho_1(s) \; ds = \theta_1 \alpha \;  \xi' + c_1.$
Since the map from $\Omega_c$ to $\Omega_p$  is doubly periodic, 
$x'(0) = 0$ and $x'(1) = 1$. Thus, $c_1 = 0$ and from the definition of $\theta_1$, $\alpha = 1$. 
Hence, we have
\begin{equation}
{
x' = {\mathbf x}\cdot{\mathbf e}_1 = R_1^{-1}(\theta_1 \; \xi') = R_1^{-1}(\theta_1 \; {\boldsymbol{\xi}}\cdot{\mathbf e}_1).
}
\label{xstuff}
\end{equation}
A similar identity follows for  $y'$ with related function $R_2$ and constant $c_2$, giving
\begin{equation}
{
 y' = {\mathbf x}\cdot{\mathbf e}_2 = R_2^{-1}(\theta_2 \; \eta') = R_2^{-1}(\theta_2 \; {\boldsymbol{\xi}}\cdot{\mathbf e}_2).
 }
 \label{ystuff}
\end{equation}
These define the functions $F$ and $G$, and
the uniqueness (\ref{VSS}) follows from the uniqueness of solutions of the Monge-Amp\`ere equation (\ref{MAls}) 
with periodic boundary conditions \cite{Loeper}. \qquad\end{proof}

Having found the solution of the Monge-Amp\`ere equation, we can now calculate the Jacobian of the map ${\mathbf J}$ and the Metric Tensor ${\mathbf M}$.  From the above 
$$
\mathbf{x}=\nabla_{\xi}P=\mathbf{e_1}^T R_1^{-1}(\theta_1 \xi^{\prime})+\mathbf{e_2}^T R_2^{-1}(\theta_2 \eta^{\prime})
$$
and the Jacobian matrix has the form
\begin{equation}
{
{\mathbf J} = \frac{\theta_1}{\rho_1(F^\prime(\xi^\prime))}\; {\mathbf e_1}\;{\mathbf e_1^T} +\frac{\theta_2}{\rho_2(G^\prime(\eta^\prime))} \;{\mathbf e_2}\;{\mathbf e_2^T}
}
\label{jacex34}
\end{equation}
with eigen/singular values 
\begin{equation}
\lambda_1 =  \theta_1/\rho_1, \hspace{.2cm}\mbox{and}\hspace{.2cm} \lambda_2=\theta_2/\rho_2.
\label{spotty1}
\end{equation}
From (\ref{eqal4a}), we infer that the mesh will be aligned to the metric
\begin{equation}
{\mathbf M} = \frac{\theta_2  \rho_1^2 }{\theta_1} \; {\mathbf e_1}\;{\mathbf e_1^T} +\frac{\theta_1 \rho_2^2}{\theta_2} \;{\mathbf e_2}\;{\mathbf e_2^T},
\label{jacex35c}
\end{equation}
with eigenvalues
\begin{equation}
\mu_1 ={\theta_2 \rho_1^2}/{\theta_1} \quad \mbox{and} \quad \mu_2 ={\theta_1 \rho_2^2}/{\theta_2}.
\label{spotty2}
\end{equation}
This Metric Tensor can be expressed in the equivalent form
\begin{eqnarray}
\mathbf{M}&=&\frac{\theta_1\rho_2^2}{\theta_2}[I+\frac{\theta_2^2\rho_1^2}{\theta_1^2\rho_2^2}-1]\mathbf{e_1}\mathbf{e_1}^T].\label{metex3}
\end{eqnarray}

These explicit forms for ${\mathbf J}$ and ${\mathbf M}$ reveal the alignment properties
of the map. Specifically, the eigendecomposition of ${\mathbf J}$ in (\ref{jacex34})
shows that the semi-axes of the ellipses described in Section 2  are parallel to
${\mathbf e_1}$ and ${\mathbf e_2}$ and thus align with the linear features.
The linear features we are aiming to represent arise when ${\mathbf x}\cdot{\mathbf e_1} = x' =  c$ and ${\mathbf x}\cdot{\mathbf e_2} = y' =  d$ so that 
respectively either $\rho_1$ is large and $\rho_2$ is not, or $\rho_2$  is large
and $\rho_1$ is not.
The anisotropy measure (\ref{qske}) in this case is then given by
$$Q_s = \frac{1}{2} \left( \frac{\theta_1 \rho_2}{\theta_2 \rho_1} + \frac{\theta_2 \rho_1}{\theta_1 \rho_2} \right) $$
which is clearly related to the relative size of the density functions $\rho_1$ and $\rho_2$, both locally and globally.  Along the linear features, where either $\rho_1 \gg1$ and $\rho_2={\cal O}(1)$, or $\rho_2 \gg 1$ and $\rho_1={\cal O}(1)$, the mesh elements will be anisotropic. Away from the linear feature, where $\rho_1$ and $\rho_2$ are both of order one, the degree of anisotropy is determined from the relative values of the density functions in the entire domain, $\theta_1$ and $\theta_2$.
\subsection{A single linear feature and the related Metric Tensor}
We now consider the special case of a periodic set of parallel (single) linear features  at an arbitrary angle relative to the coordinate axes.
For this we take $\rho_2 = \theta_2=1$ and
$$
\rho(\mathbf{x})=\rho_1(x^\prime), \quad \theta = \theta_1.
$$
In this special case $G^{\prime\prime}=1$, and so $G^{\prime}=\eta^{\prime}$,
$\mathbf{x}=\mathbf{e_1}^T R_1^{-1}(\theta\xi^{\prime})+\mathbf{e_2}^T \eta^{\prime}$, and
$$
{\mathbf J} =  \frac{\theta} {\rho}\; {\mathbf e_1}\;{\mathbf e_1^T} + {\mathbf e_2}\;{\mathbf e_2^T},
$$
so the associated Metric Tensor is
\begin{eqnarray}
\mathbf{M}&=&\theta[I+(\frac{\rho^2}{\theta^2}-1) \; \mathbf{e_1}\mathbf{e_1}^T].\label{metex1}
\end{eqnarray}

The cases presented thus far are for a prescribed density function. However, in a typical calculation,  the density function $\rho$ is based on some underlying function $u(\mathbf{x})$
that we seek to approximate on the mesh. It is therefore useful to consider the Metric Tensor in terms of this function. This will be especially instructive if we are to draw a meaningful comparison between the metrics derived here and the standard ones used by variational methods, especially those known to minimise interpolation error of $u({\mathbf x})$  which are given in (\ref{eqal5}) and (\ref{eqal6}). 

For this special case with a strongly anisotropic function $u({\mathbf x}) \equiv u({\mathbf x}\cdot{\mathbf e_1}) = u(x')$,
$$\nabla u = {\mathbf e}_1 u' \quad \mbox{and} \quad \|\nabla u\|^2 = (u')^2.$$
A commonly used scalar  metric is the arc-length density function 
\[
\rho({\mathbf x})=\sqrt{1+\alpha_h\| \nabla u\|^2}
\]
which for this anisotropic function is simply 
$$\rho({\mathbf x}) = \sqrt{1 + \alpha_h (u'(x'))^2 }, $$
and thus has precisely the form considered at the start of this section. It follows from (\ref{metex1}) that the associated Metric Tensor is 
\[
\mathbf{M}=\theta[I +\alpha\nabla u\nabla u^T],\hspace{.25cm}\mbox{where}\hspace{.25cm}\alpha={(1+\alpha_h\| \nabla u\|^2-\theta^2)}/{\theta^2\| \nabla u\|^2}.
\]
This Metric Tensor is very similar in structure to those typically used when constructing a mesh directly with a metric based approach. 
It has the same form as the metric in (\ref{eqal5}), a metric known to minimise piecewise constant interpolation error. However, there are subtle differences between the two. Notice that $\alpha$ is not constant here as in (\ref{eqal5}). 
Also, it is not possible to select a density function for which $\alpha$ is a constant when generating an optimally transported mesh.

Choosing instead the density function  
\[
\rho=\sqrt{1+\alpha_h(\vert u_{xx}\vert+\vert u_{yy}\vert)},
\]
gives
$$\mathbf{M}=\theta[I+\alpha \vert H(u)\vert] \quad \mbox{where} \quad \alpha=\frac{1+\alpha_h(\vert u_{xx}\vert+\vert u_{yy}\vert)-\theta^2}{\theta^2(\vert u_{xx}\vert+\vert u_{yy}\vert)}.$$
This metric has the same structural form as (\ref{eqal6}), a metric known to minimise piecewise linear interpolation error. However, there are again
subtle differences between the two, since $\alpha$ is not constant whereas $\theta$ is. The implications of these differences with regard to error minimisation requires further investigation and is the subject of a forthcoming paper.
\subsection{Examples}
We now consider two specific analytical examples which illustrate the theory described above.
\subsubsection{Example 1: A single periodic shock}
As a first example we consider a periodic array of linear features aligned at $\pi/4$ to the coordinate axes so that ${\mathbf e_1}^T=(1 \; 1)/\sqrt{2}$
and ${\mathbf e}_2^T=(1 \; -1)/\sqrt{2}.$  As a periodic mesh density we take
$$
\rho(\mathbf{x})=1+50\sum_{n=-\infty}^{\infty} \mathrm{sech}^2(50(\sqrt{2}x^{\prime}-n)):=\rho_1(x^{\prime}), \quad x' = {\mathbf x}\cdot{\mathbf e_1}.
$$
This density is concentrated along a set of lines of width $1/50\sqrt{2}$ which are parallel to ${\mathbf e}_2$, one of which passes through the coordinate origin,
and the others arising when $x'=\pm 1/\sqrt{2}, \pm 2/\sqrt{2}, \ldots$. Note that along each such line $\rho = 51 + {\cal O}(\exp(-50))$ and away from each such line  $\rho = 1 + {\cal O}(\exp(-50)).$

A direct calculation gives
\begin{equation}
{
\theta=\theta_1=\int_{\Omega_p}\rho(\mathbf{x}) \;  d\mathbf{x}= 3 + {\cal O}(\exp(-50)).
}
\label{thec}
\end{equation}
Similarly
\[
R_1(x^{\prime})=x^{\prime}+\frac{1}{\sqrt{2}}\sum_{n=-\infty}^{\infty}[\tanh(50(\sqrt{2}x^{\prime}-n))-\tanh(-50n)].
\]
The inverse of $R_1$ can be computed by fitting a spline through the data points $(R_1(x^{\prime}_i), \hspace{.2cm}x^{\prime}_i)$, for $x^{\prime}_i=\sqrt{2}i/N'$, $i=0,...,N'$. A plot of $R_1^{-1}$ is given in Fig. \ref{R1} for $N'=1000$. 
\begin{figure}[hhhhhhhhhhhhhhhhhhhhhhhhhhhhhhhhhhhhhhhhhhhhhh]
\begin{center}\includegraphics[height=5.5cm,width=6.5cm]{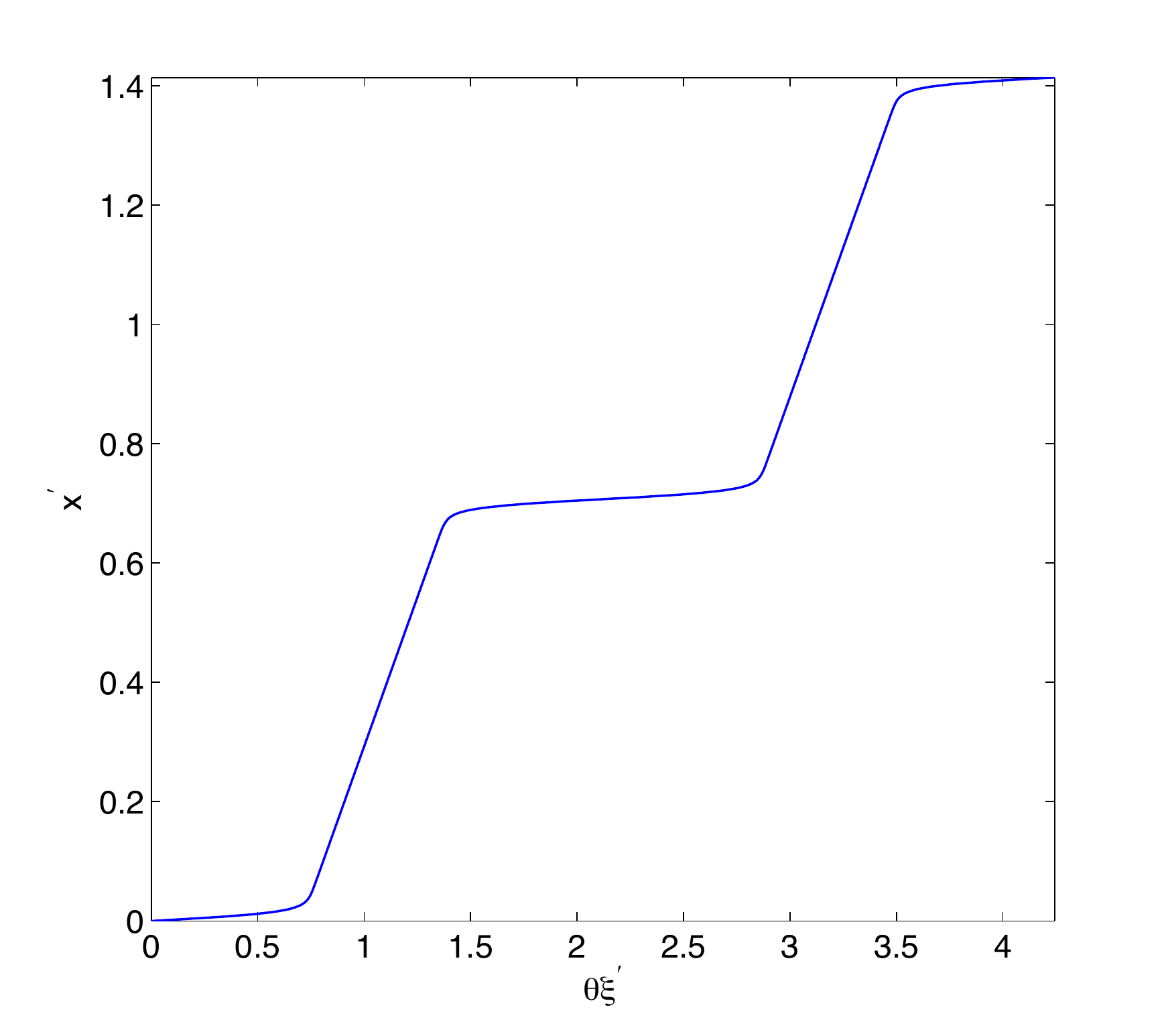}\end{center}
\caption{The function $R_1^{-1}$ for Case 1, $\theta=3 + {\cal O}(\exp(-50))$.}
\label{R1}
\end{figure}
Observe that this function is very flat close to $x'=0,1/\sqrt{2},\sqrt{2}$, and mesh points will be
concentrated at these values.

It follows immediately that  $R_2(y') = y',$ and also
$$\xi' = (\xi + \eta)/\sqrt{2}, \quad \eta'=(\xi-\eta)/\sqrt{2}, \quad x = (x'+y')/\sqrt{2}, \hspace{.1cm}\mbox{and} \hspace{.1cm} y=(x'-y')/\sqrt{2}.$$
Therefore, from (\ref{xstuff}) and (\ref{ystuff}) it follows that
$$
x=\frac{1}{\sqrt{2}}[R_1^{-1}(\theta(\xi +\eta)/\sqrt{2})-((-\xi +\eta)/\sqrt{2})],
$$
$$
y=\frac{1}{\sqrt{2}}[R_1^{-1}(\theta(\xi +\eta)/\sqrt{2})+((-\xi +\eta)/\sqrt{2})],
$$
where $\theta$ is given by (\ref{thec}). A plot of the resulting mesh is shown in Fig. \ref{shocke1}(a) with a close-up in Fig. \ref{shocke1}(b). This mesh is the image of a uniform square computational mesh and has the 
points $(x(\xi_i,\eta_j),y(\xi_i,\eta_j))$, where $\xi_j=\eta_j=j/(n-1)$, for $i,j=0,...,N-1$ and $N=60$.
\begin{figure}[hhhhhhhhhhhhhhhhhhhhhhhhhhhhhhhhhhhhhhhhhhhhhhhhhhhhhhhhhhhhhhhhhhhhhhhhhhhhhhhhhhhhhhhhhhhh]
\begin{center}\includegraphics[height=5.5cm,width=6cm]{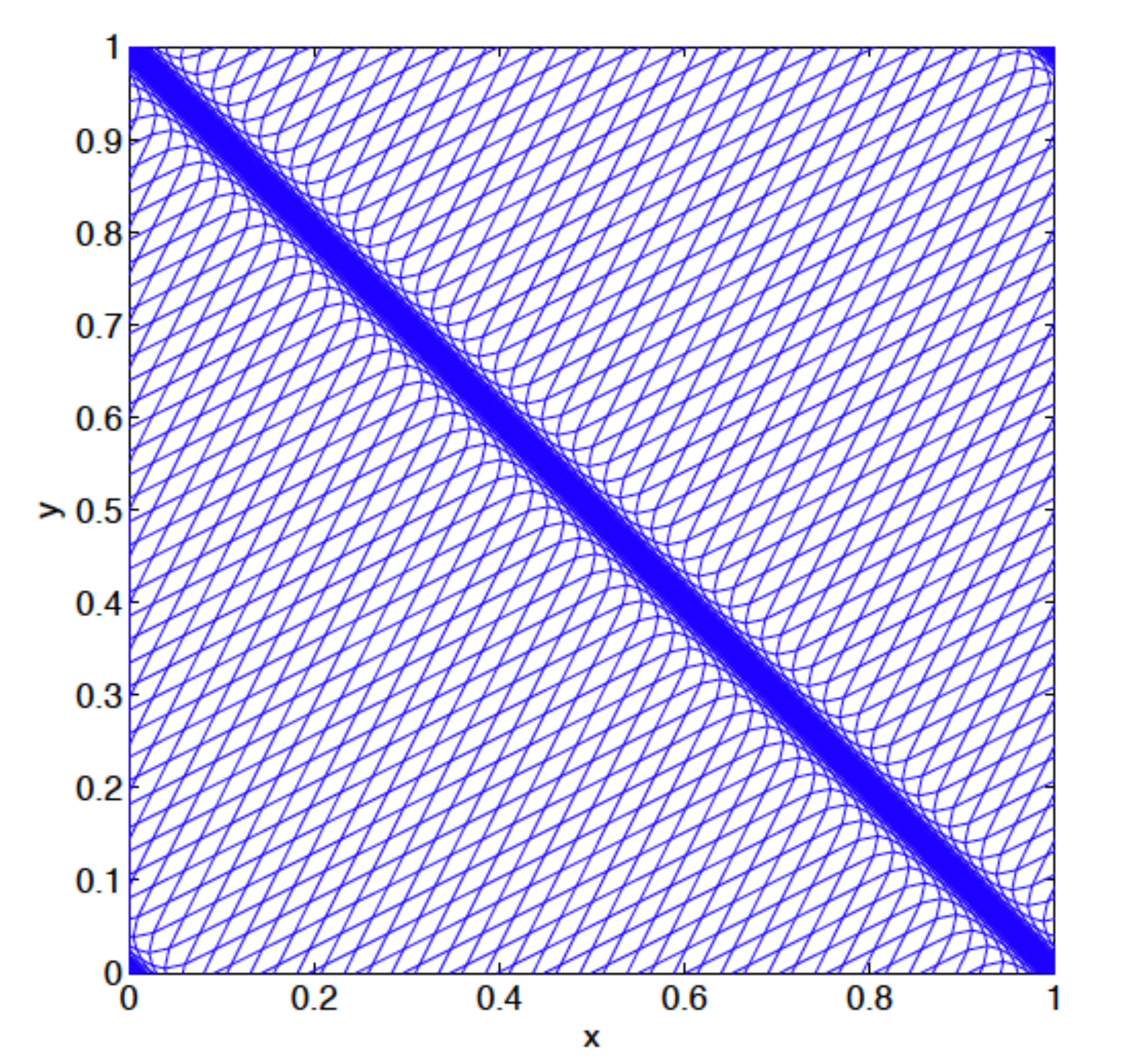}\includegraphics[height=5.5cm,width=6cm]{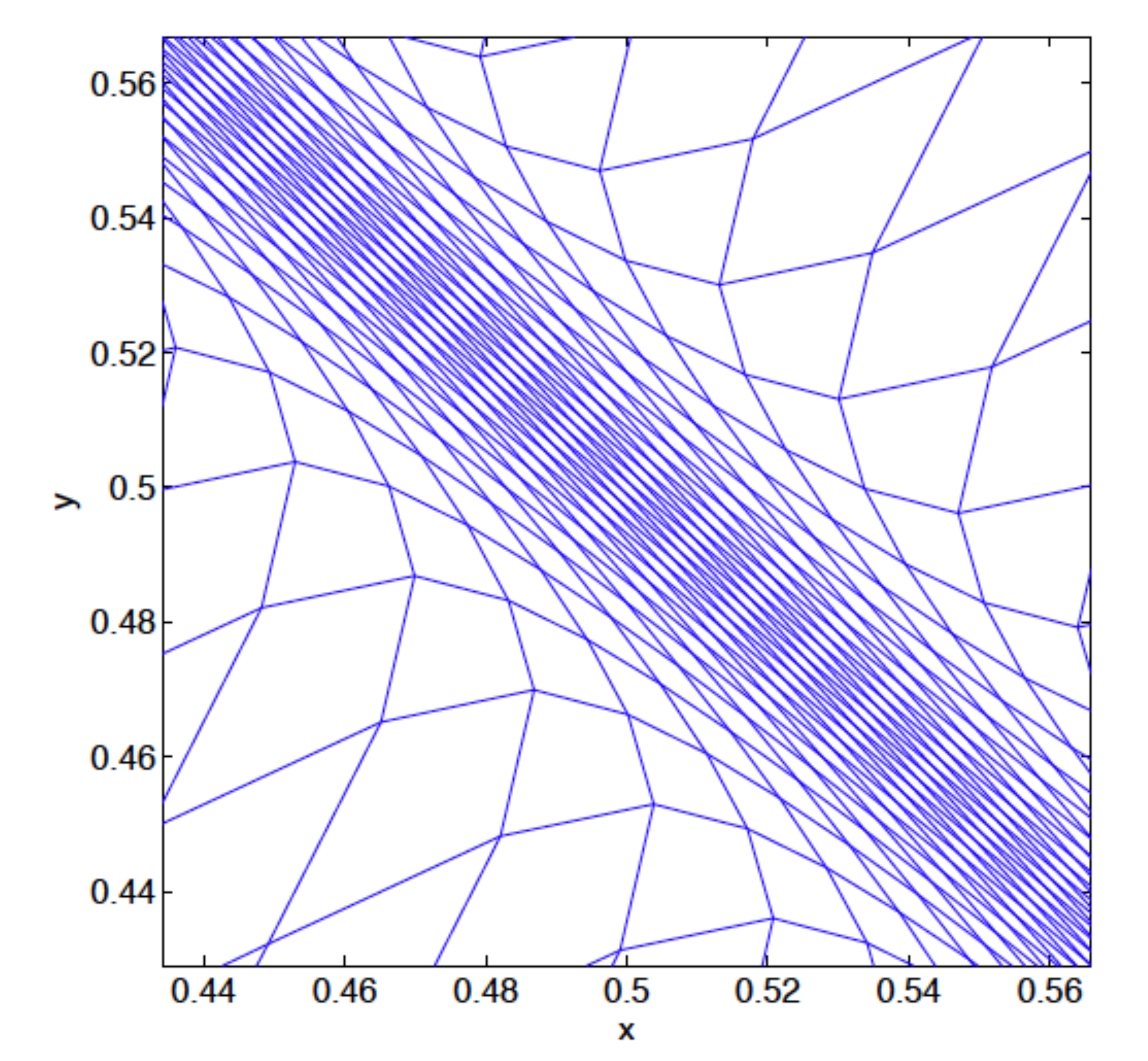}
\caption{(Left) A $(60\times60)$ mesh generated from the analytical solution of the Monge Ampere equation for the density function in Case 1. (Right) A zoom of the region along the shock where the density function is large.}\label{shocke1}\end{center}
\end{figure}
We see that not only is the mesh concentrated along the linear features parallel to ${\mathbf e}_2$ but it is also closely
aligned with this vector. Away from the linear feature the mesh has a distinctive diamond shape, with each diamond of uniform size and with axes  in the directions
${\mathbf e}_1$ and ${\mathbf e}_2$. The close-up shows the diamonds stretched along the linear feature and then smoothly evolving into 
uniform diamonds. 

The alignment properties of the mesh can be calculated directly from the Jacobian.  The eigenvalues of ${\mathbf J}$ (which coincide with the singular values) 
are given from (\ref{spotty1}) by $\lambda_1 = \theta/\rho$ and $\lambda_2 = 1$. Ignoring exponentially small terms, we have $\lambda_1 =  3/51$ within the linear feature, and $\lambda_1 = 3$
away from the linear feature, implying that the alignment measure $Q_s$ in (\ref{qske}) is 
\begin{equation}
Q_s = 8.529 \quad \mbox{within linear feature}, \quad Q_s = 1.667 \quad \mbox{outside linear feature}.
\label{qnums}
\end{equation}
In contrast, by construction the M-alignment measure $Q_a = 1$ at all mesh points.
\subsubsection{Example 2: Two orthogonal shocks}
Consider orthogonal shocks of different widths and magnitudes with the associated scalar density
\[
\rho(\mathbf{x})=\rho_1(x^{\prime})\rho_2(y^{\prime}).
\]
Here $\rho_1(x')$, $\theta_1$, and $R_1(x')$ 
are the same as in Example 1, and 
\[\rho_2=1+10\sum_{m=-\infty}^{\infty}\mathrm{sech}^2(25(\sqrt{2}y^{\prime}-m)).
\]
A direct calculation gives $\theta_2=1.8 + {\cal O}(\exp(-25))$, and
\[
R_2(y^{\prime})=y^{\prime}+\frac{\sqrt{2}}{5}\sum_{m=-\infty}^{\infty}[\tanh(25(\sqrt{2}y^{\prime}-m))-\tanh(-25m)].
\]
The inverse of $R_2$ can be computed in the same manner as for $R_1$ in the previous case and has the same qualitative structure.

This density $\rho(\mathbf{x})$ concentrates mesh points along linear features parallel to ${\mathbf e}_1$ and ${\mathbf e}_2$, and
using the same procedures as in Example 1, the mesh is computed as
\begin{eqnarray}
x&=&\frac{1}{\sqrt{2}}[R_1^{-1}(\theta_1(\xi +\eta)/\sqrt{2})-R_2^{-1}(\theta_2(-\xi +\eta)/\sqrt{2})],\nonumber\\
y&=&\frac{1}{\sqrt{2}}[R_1^{-1}(\theta_1(\xi +\eta)/\sqrt{2})+R_2^{-1}(\theta_2(-\xi +\eta)/\sqrt{2})].\nonumber
\end{eqnarray}
 A plot of the image of a uniform mesh under this map is shown in Fig.\ref{shocke3},
\begin{figure}[hhhhhhhhhhhhhhhhhhhhhhhhhhhhhhhhhhhhhhhhhhhhhh]
\includegraphics[height=5.5cm,width=6cm]{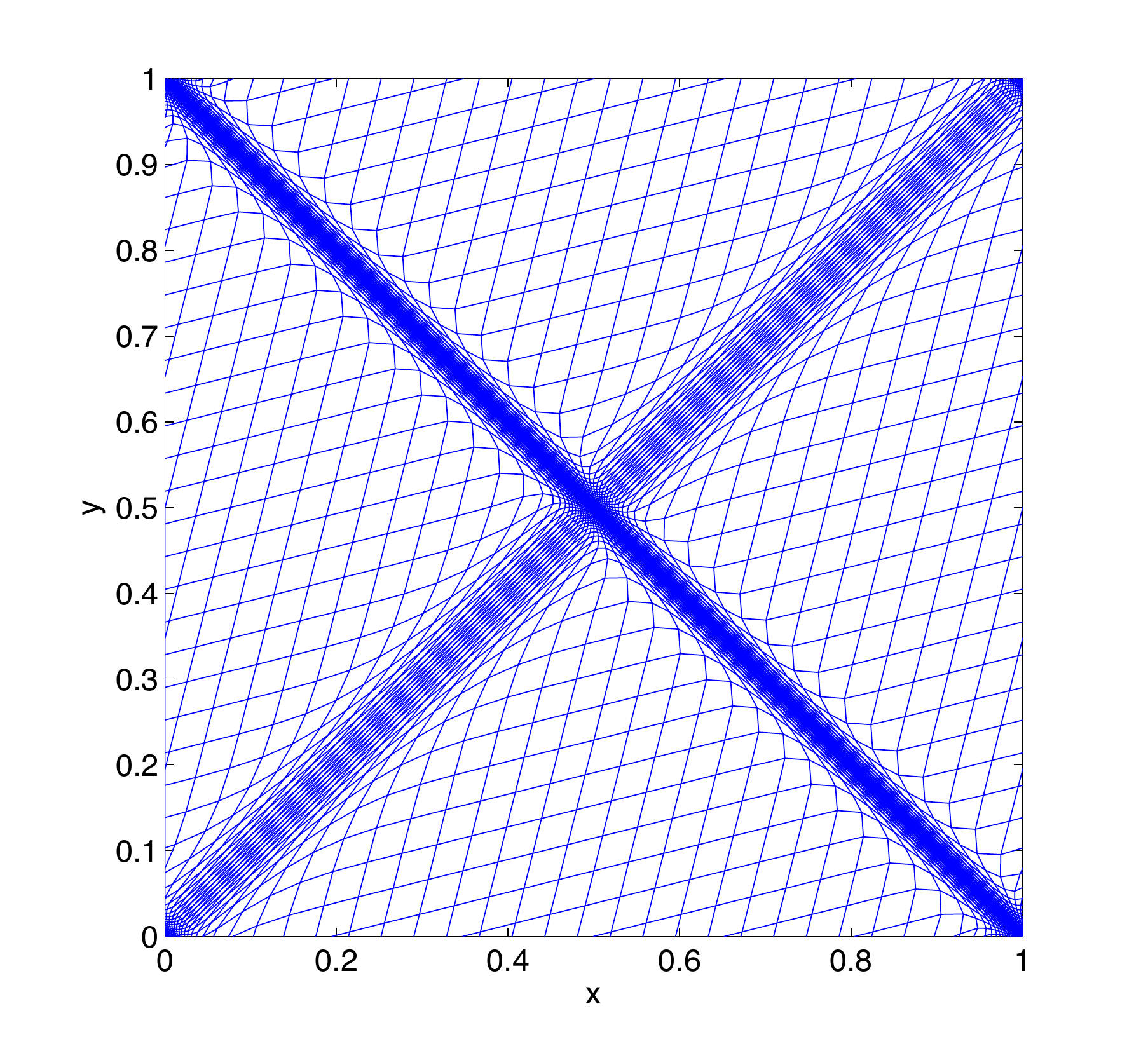}\includegraphics[height=5.5cm,width=7.2cm]{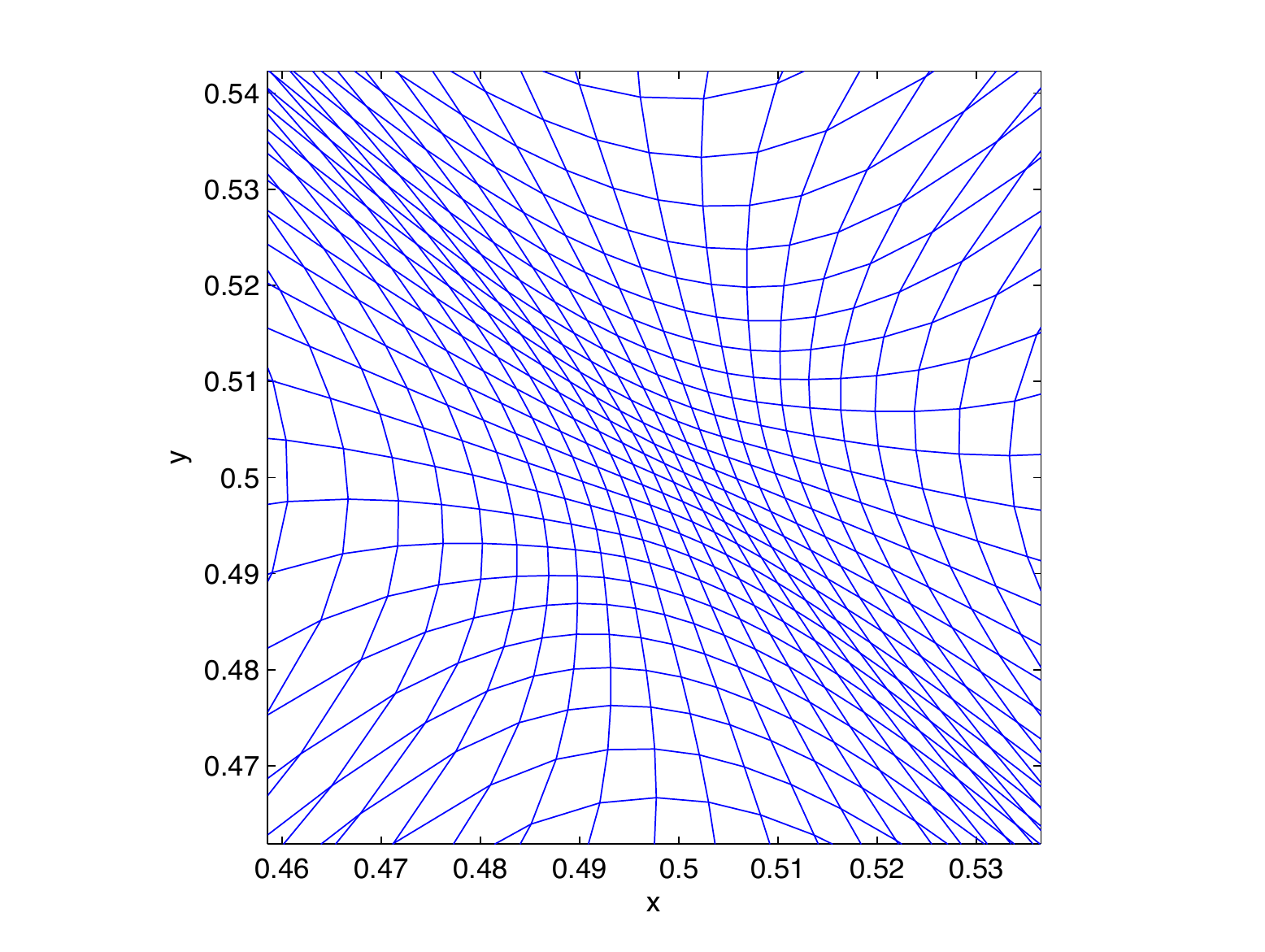}
\caption{(Left) A $(60\times60)$ mesh generated from the analytical solution of the Monge Ampere equation for the density function in Case 2. (Right) A zoom of the region along the shock where the density function is large.}
\label{shocke3}
\end{figure}
where we see the excellent alignment of the mesh to the two linear features. Note also the very smooth transition of the mesh
from one feature to the other. The eigenvalues  $\lambda_1$ and $\lambda_2$ (up to exponentially small terms) are given by:
\begin{enumerate}
\item First linear feature alone: \quad $\lambda_1=3/51, \quad \lambda_2 = 1.8, $
\item Second linear feature alone: \quad $\lambda_1 = 3, \quad \lambda_2 = 1.8/11$
\item Intersection of the two linear features: $\lambda_1 = 3/51, \quad \lambda_2 = 1.8/11$
\item Outside the two linear features: $\lambda_1 = 3, \quad \lambda_2 = 1.8.$
\end{enumerate}
The respective values of the alignment function $Q_s$ are 
\begin{equation}
1. \quad Q_s  = 15.31, \quad 2. \quad Q_s =  9.19, \quad 3. \quad Q_s = 1.57, \quad 4. \quad Q_s = 1.13.\label{qnums2}
\end{equation}
We deduce that away from the linear features and also in the intersection of the two features the mesh in 
Example 2 is less skew than that of Example 1.
\section{Numerical Examples using Parabolic Monge-Amp\`ere algorithm}
The examples described in the previous section relate to problems in which we can exactly solve the Monge-Amp\`ere equation.
For most problems we must instead use a numerical method to compute the solution of  this nonlinear elliptic PDE
together with its associated boundary conditions.  Although various methods have been proposed for solving the Monge-Amp\`ere equation directly for the purpose of mesh generation \cite{Finn}, \cite{froese}, a method that is both cheap and reliable is  relaxation in which the Monge-Amp\`ere equation 
is solved as the limit of the explicit solution of an associated parabolic equation. This is implemented as the 
Parabolic Monge-Amp\`ere algorithm (PMA)  \cite{BW:09}, \cite{BW:06}, \cite{walsh},\cite{Pab}.  In this section we give a series of four calculations using the PMA algorithm The first two are simply
numerical calculations of the two exact solutions described in the previous section. These
computations show clearly the convergence to the unique solution obtained in
Section 4. We will also consider various numerical measures of alignment. The second two examples look at problems in which the features are non-orthogonal or have
significant curvature.
\subsection{Example 1: Parallel linear shocks} Consider parallel linear shocks in a periodic domain, with $\rho$ given as in Example 1 of Section 4.
The  mesh generated by using the PMA algorithm is shown on the left in Fig.\ref{mesh1LS} and closely corresponds to that given from the exact solution of the Monge-Amp\`ere equation presented in Fig \ref{shocke1}. 
The figure on the right depicts the ellipses (in blue) formed from circumscribing the eigenvectors of $\mathbf{J}$, the lengths of which are scaled by their associated eigenvalues.
The alignment properties of the mesh are clear from these ellipses.
\begin{figure}[hhhhhhhhhhhhhhhhhhhhhhhhhhhhhhhhhhhhhhhhhhhhhhhhhhh!!!!!]
\begin{center}\includegraphics[height=5.5cm,width=6.6cm]{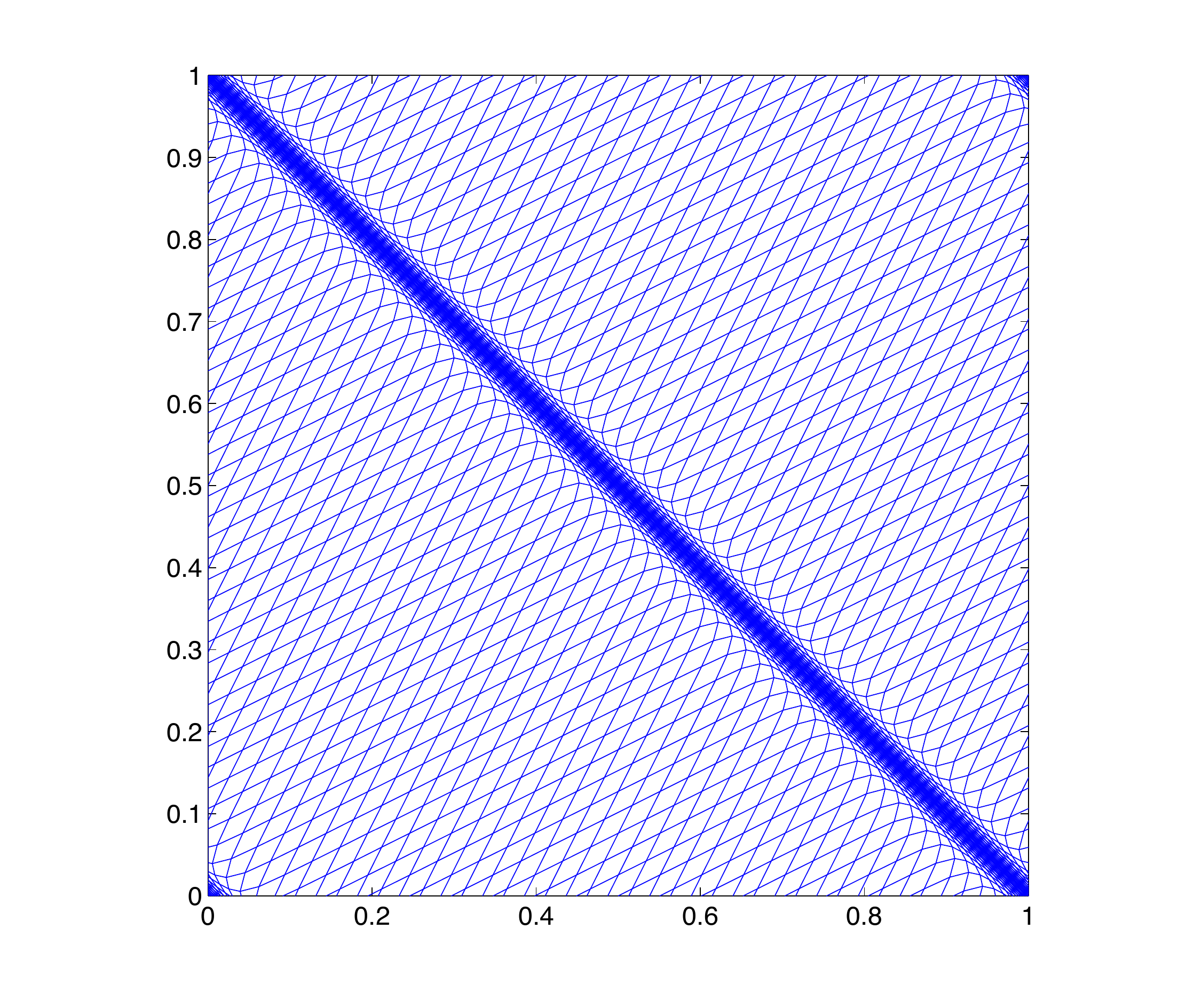} \includegraphics[height=5.5cm,width=6cm]{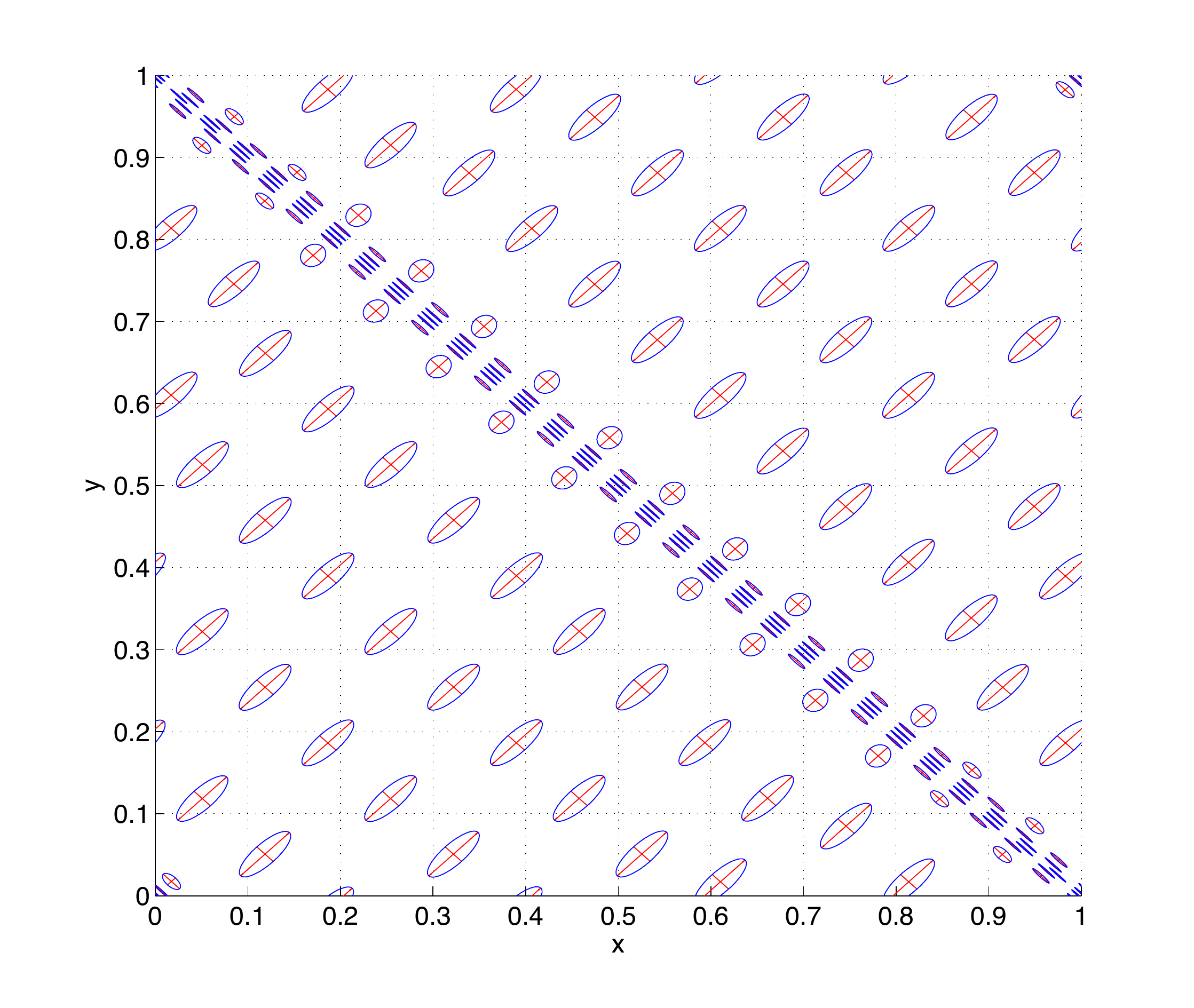}\\
\caption{ \small Example 1: The PMA mesh with (60 $\times$ 60) mesh points (left), and ellipses from eigensystem of associated Jacobian Matrix (right).}\label{mesh1LS}\end{center}
\end{figure}
The value of $Q_s$ computed within the linear feature is  $8.364$ and outside the linear feature is $1.669$, which compares well with the analytical results in (\ref{qnums}). Furthermore, the M-alignment measure 
$Q_a$ ranges from $1$ to $1.017$, showing the close alignment
of the mesh calculated using the PMA algorithm to the Metric Tensor ${\mathbf M}$ in (\ref{metex1}).
 \subsection{Orthogonal shocks of different density}We now consider a pair of orthogonal shocks of different densities as defined in Example 2. 
In Fig.\ref{mesh3new} the PMA mesh is shown on the left and ellipses formed from eigensystems of its associated Jacobian matrix $\mathbf{J}$ on the right. 
\begin{figure}[hhhhhhhh!!!!!]
\includegraphics[height=5.5cm,width=6.2cm]{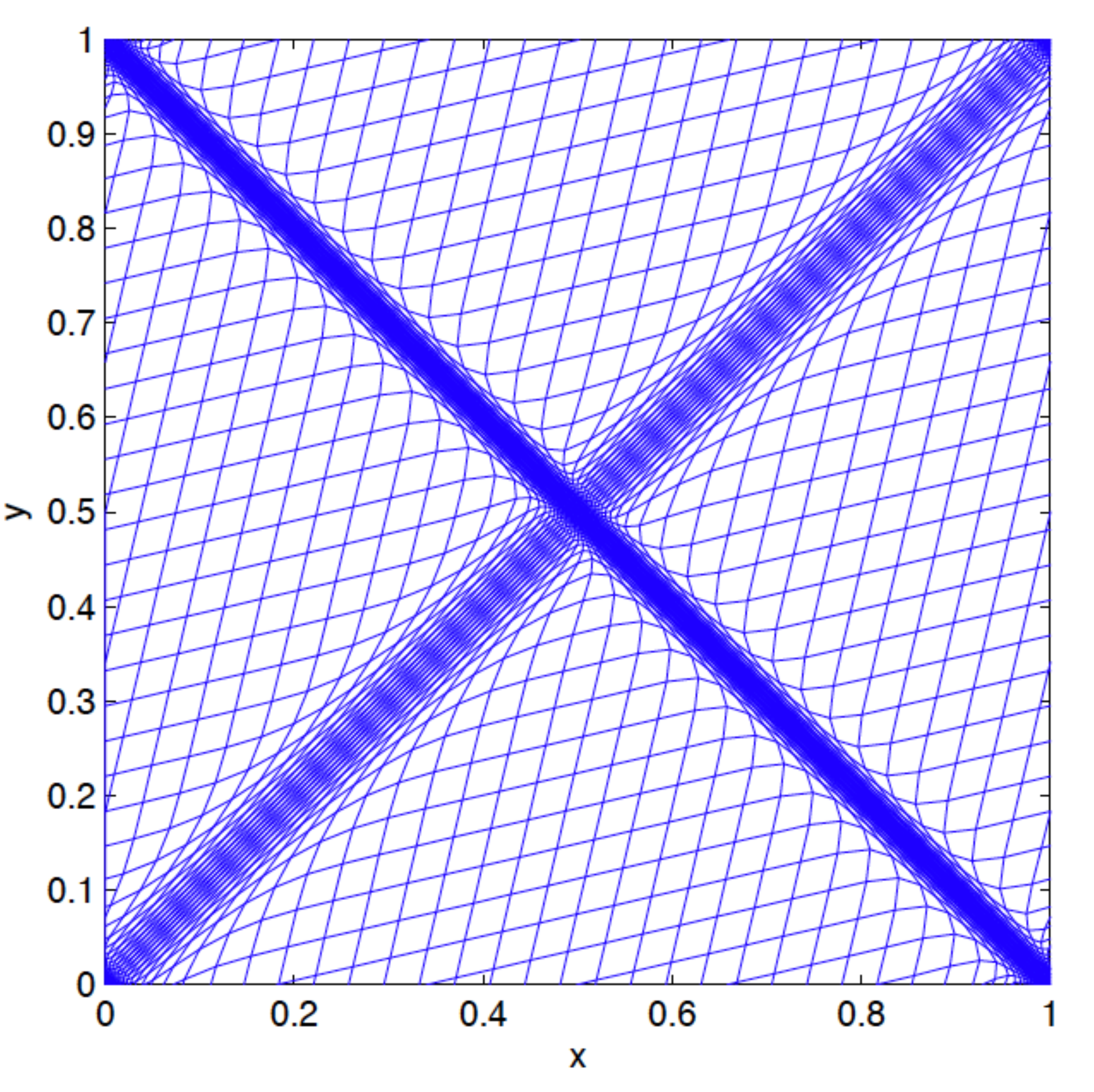} \includegraphics[height=5.7cm,width=6.5cm]{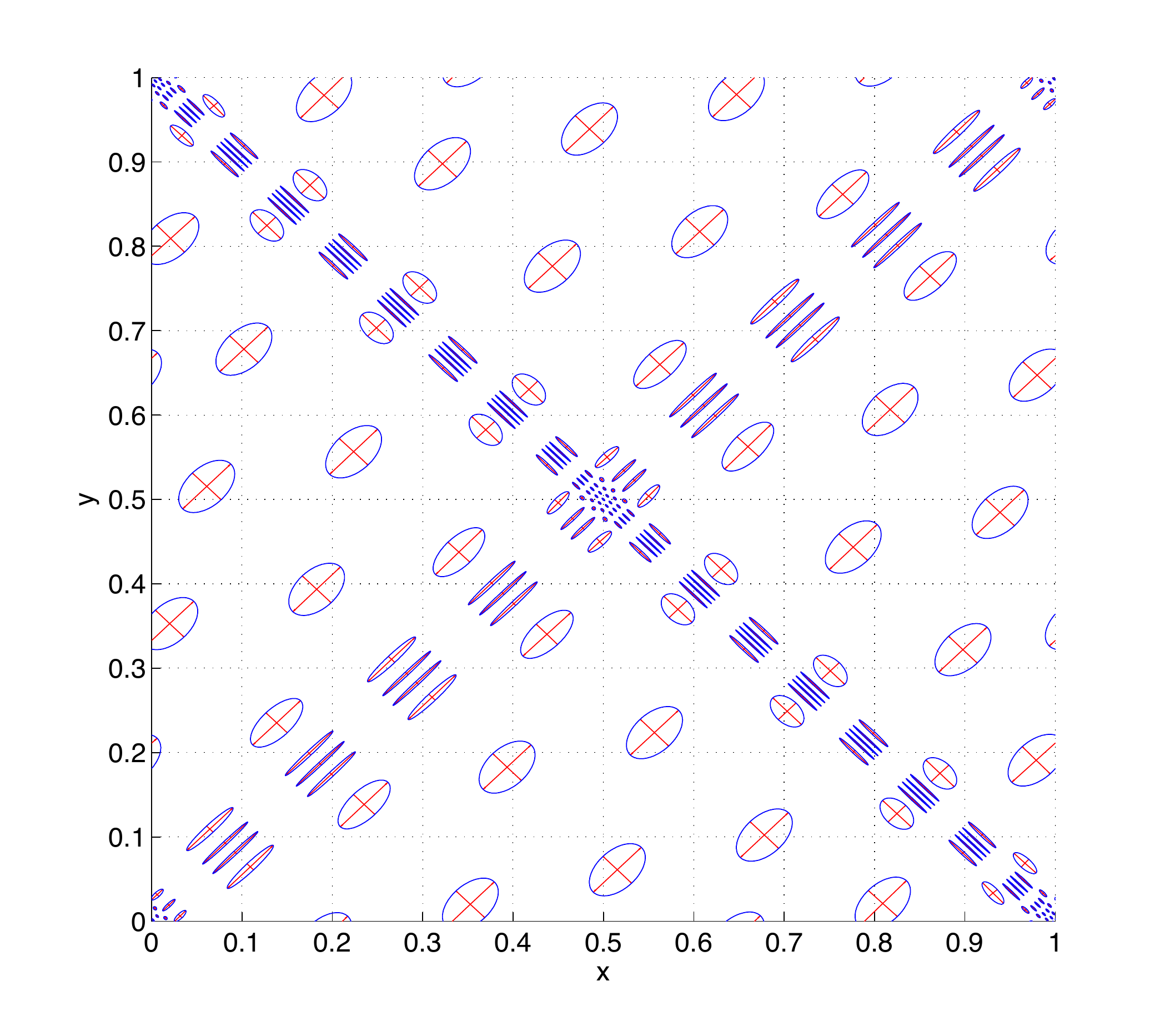}\\
\caption{ \small Example 2: The mesh calculated using the PMA algorithm with 60 $\times$ 60 mesh points (left), and ellipses from eigensystem of associated Jacobian Matrix (right).}
\label{mesh3new}
\end{figure}
The values of $Q_s$ within each linear feature alone, at the intersection of the linear features, and outside the two linear features are respectively
$$1. \quad Q_s  = 15.06, \quad 2. \quad Q_s =  9.04, \quad 3. \quad Q_s = 1.59, \quad 4. \quad Q_s = 1.14,$$
which again compare well with the values of  $Q_s$ given in (\ref{qnums2}). The calculated alignment measure 
$Q_a$ ranges from $1$ to $1.028$, again demonstrating that the PMA mesh aligns well to the Metric Tensor in (\ref{metex3}) and the Jacobian matrix given in (\ref{jacex34}).
\subsection{Non-orthogonal shocks}
In this example we consider a problem with non-orthogonal intersecting shocks defined by the scalar density function $\rho=\rho_1\rho_2$
\begin{eqnarray}
\rho_1&=&1+50\sum\limits^{-1}_{i=1}\mathrm{sech}^2(50( \sqrt{2}x^{\prime}-i))),\quad \rho_2=1+10\sum\limits^{3}_{i=1}\mathrm{sech}^2(25( \sqrt{5}y^{\prime}-(2i-1))),\nonumber\\
x^{\prime}&=&-\frac{x}{\sqrt{2}}+\frac{y}{\sqrt{2}},\quad y^{\prime}=\frac{x}{\sqrt{5}}+\frac{2y}{\sqrt{5}}.\nonumber
\end{eqnarray}
The mesh calculated using the PMA algorithm and the associated ellipses are shown in Fig.\ref{mesh4}. 
\begin{figure}[hhhhhhhh!!!!!]
\includegraphics[height=5.5cm,width=6.2cm]{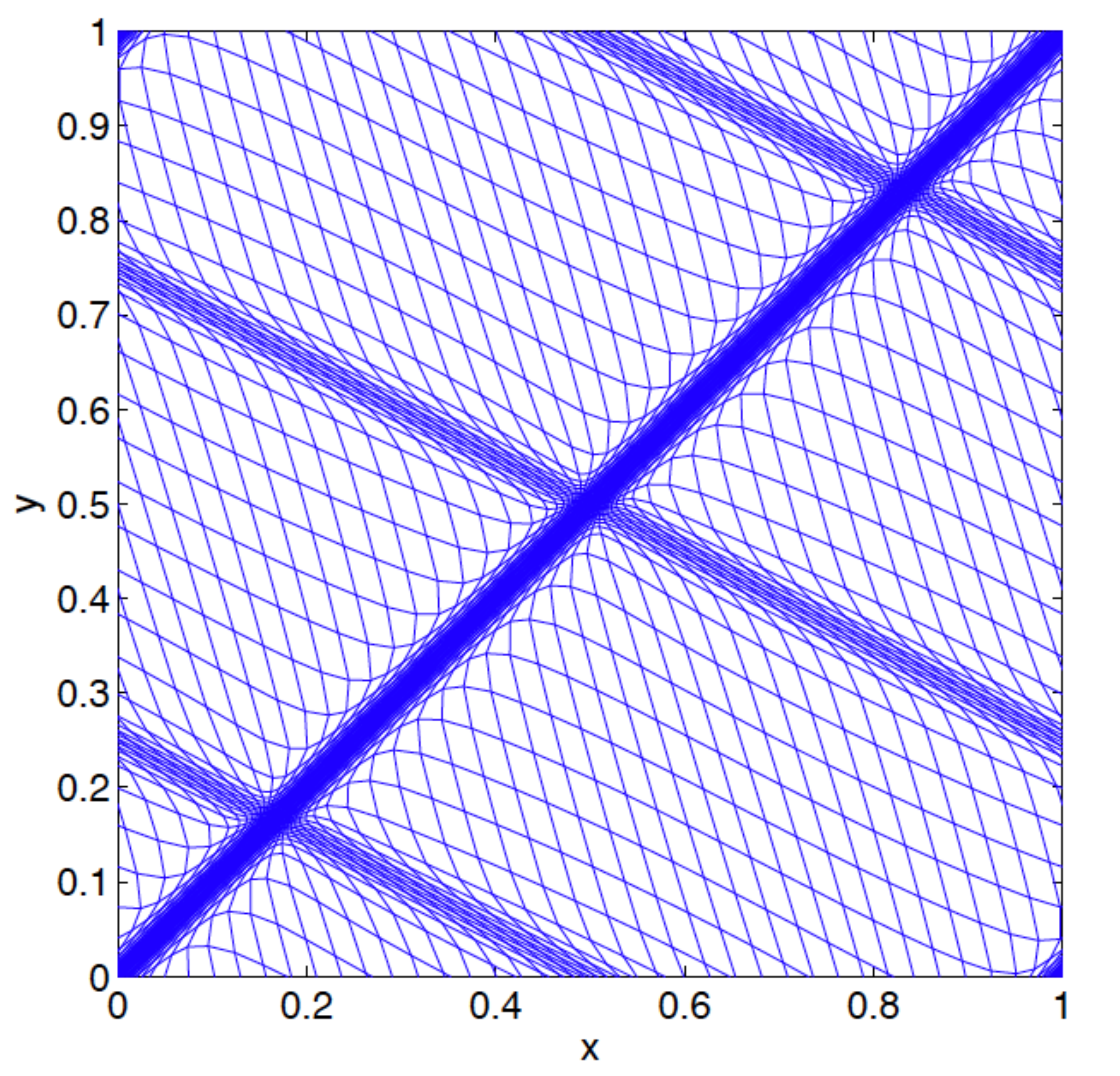} \includegraphics[height=5.8cm,width=6.5cm]{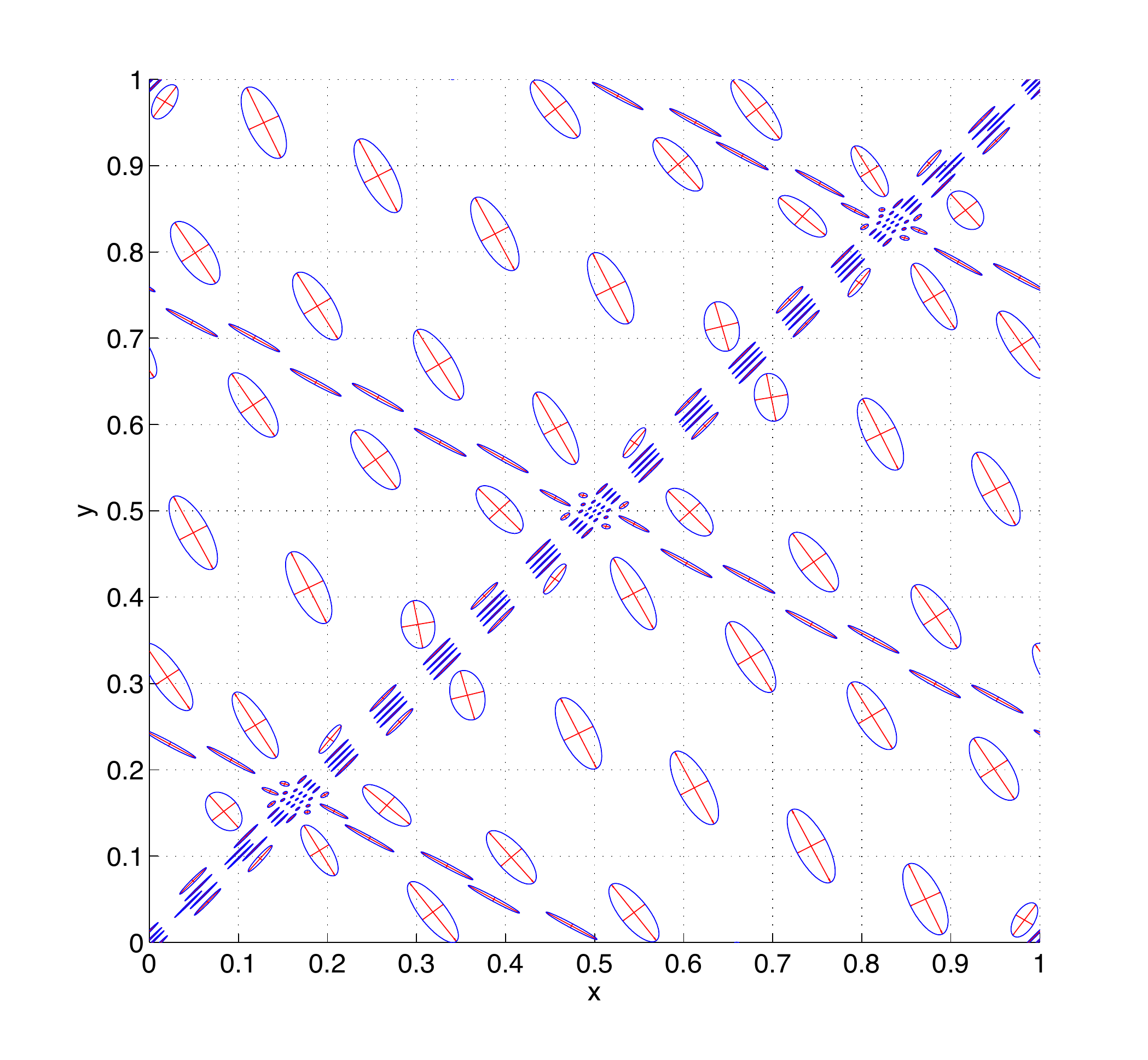}\\
\caption{ \small Example 3: The PMA mesh with 60 $\times$ 60 mesh points (left), and ellipses from eigensystem of associated Jacobian Matrix (right).}
\label{mesh4}
\end{figure}
The case of non-orthogonal shocks is more problematic to analyse as the PMA solution is not separable in this case and we have no exact 
analytic solution. 
However, we expect that locally along each shock, away from the intersection, the mesh aligns to the same metric as that derived for a single shock of similar magnitude. Consequently, if we consider a mesh element where $\rho_1$ is large and $\rho_2\approx1$, then we expect the mesh to align to a Metric Tensor $\mathbf{\tilde{M}} \approx \mathbf{M}$ with eigenvalues and eigenvectors given by the expressions (\ref{jacex35c},\ref{spotty2}) so that
\[
\tilde{\mu}_1={\theta_2 \rho_1^2}/{\theta_1}, \hspace{.5cm} \tilde{\mu}_2={\theta_1}/{\theta_2}, \hspace{.5cm} \mbox{and}\hspace{.5cm} \mathbf{\tilde{e}_1}=[-1/\sqrt{2}, 1/\sqrt{2}]^T. 
\]
The plot on the left in Fig. \ref{metric2_ex4} demonstrates that this is indeed a good approximation to the actual metric, where we see the ellipses given by $\mathbf{J}$ along the shock for which $\rho_1$ is large. Note that there is a contribution from the smaller shock defined by $\rho_2$, but this is fairly negligible.  Similarly, if we consider a mesh element where $\rho_2$ is large and $\rho_1\approx1$, then we expect the mesh to align to a metric $\mathbf {\tilde{M}}$ with eigenvalues and eigenvectors given by
\[
\tilde{\mu}_1=\frac{\theta_1 \rho_2^2}{\theta_2}, \hspace{.5cm} \tilde{\mu_2}=\frac{\theta_2}{\theta_1}, \hspace{.5cm} \mbox{and}\hspace{.5cm} \mathbf{\tilde{e}_1}=[1/\sqrt{5}, 2/\sqrt{5}]^T. 
\]
For the symmetric matrix  $\mathbf{\tilde{J}}$ corresponding to the metric $\mathbf {\tilde{M}}$, a comparison of  the ellipses for $\mathbf{\tilde{J}}$ and $\mathbf{J}$
can be seen on the right in Fig. \ref{metric2_ex4},  demonstrating this is also a fairly good approximation to the actual metric in this region. However, the approximation is 
impacted by and not quite as accurate as along the larger shock, with the eigenvectors less orthogonal and tangential to the feature. 
\begin{figure}[hhhhhhhhhhhhhhhhhhhhhhhhhhhhhhhhhhhhhhhhhhhhhhhhhhhhhhhhhhhhhhhhhhhhhhhhhhhhhhhhhhhhhhhhhhhhhhhhhhhhhh]
\includegraphics[height=5.4cm,width=6.2cm]{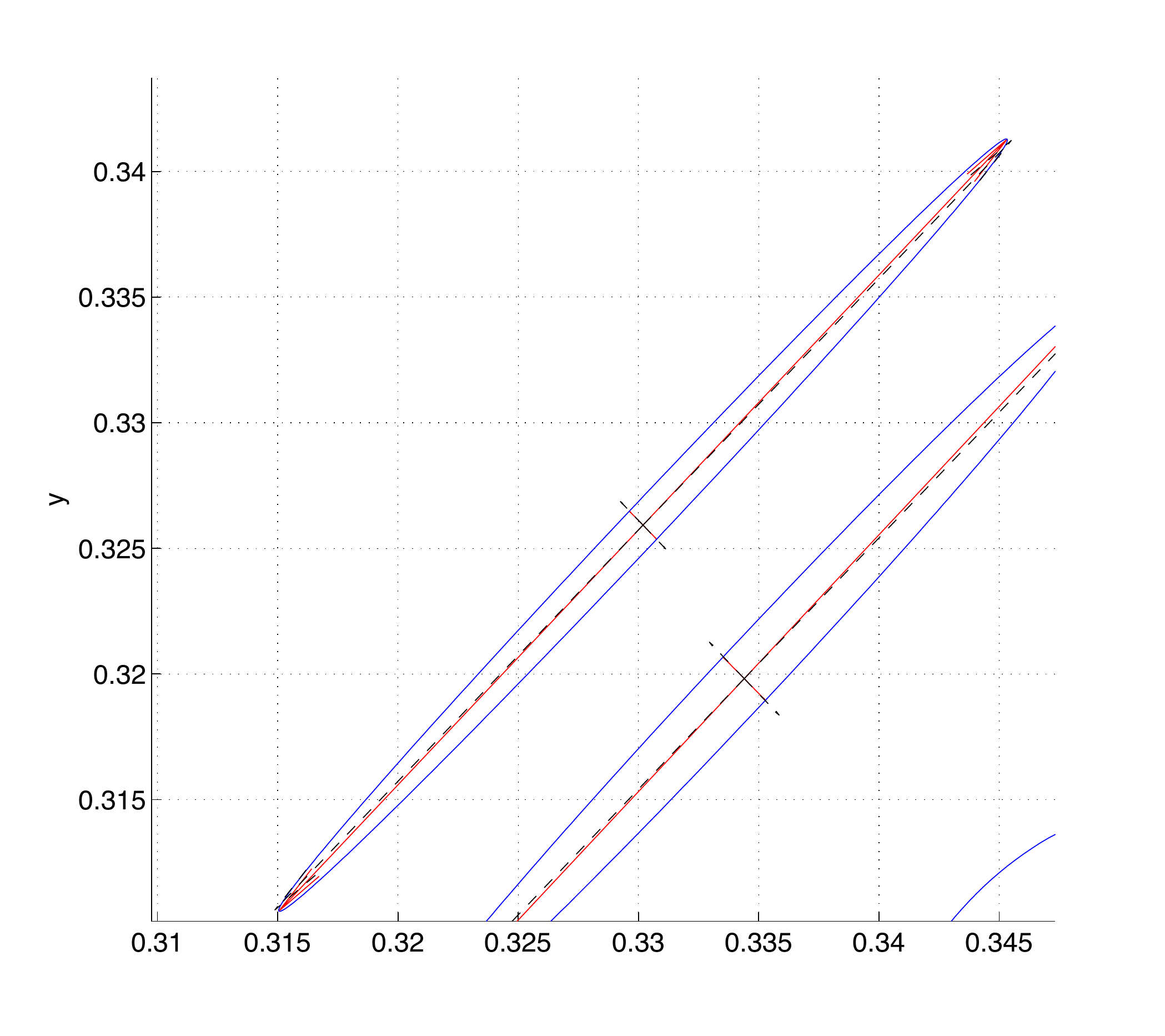}\includegraphics[height=5.4cm,width=6.2cm]{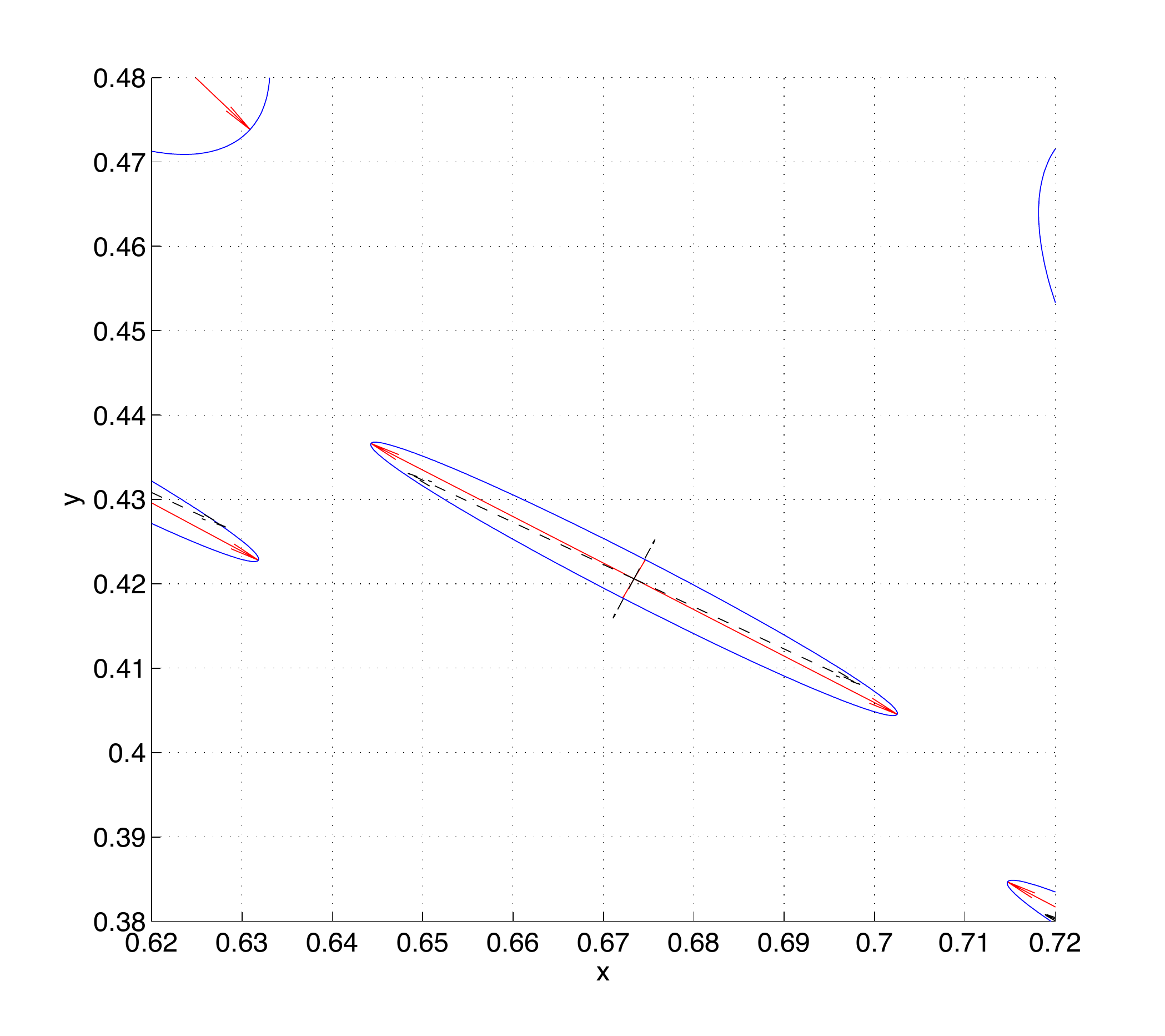}\\
\caption{Example 3: The eigensystems for $\mathbf{J}$ (red) and $\mathbf{\tilde{J}}$ (black), for a mesh element along the shocks for which 
respectively $\rho_1$ (left) and $\rho_2$ (right) is large.}
\label{metric2_ex4}
\end{figure}
\subsection{A nonlinear shock} In this next example we consider a shock concentrated along a nonlinear feature (in this case a sine wave) defined by the scalar density function
\begin{eqnarray}
\rho&=&1+50 \; \mathrm{sech}(50\vert \Psi\vert)^2\nonumber, \hspace{1cm}\Psi=y-0.2 \; \sin(2\pi x)-0.5.\nonumber
\end{eqnarray}
In Fig.\ref{mesh6} the mesh calculated using the PMA algorithm is shown on the left, and the corresponding ellipses for $\mathbf{J}$ and $\mathbf{\tilde{J}}$ are shown on the right.  The symmetric matrix $\mathbf{\tilde{J}}$ corresponds to 
a Metric Tensor $\mathbf {\tilde{M}}$ with eigenvalues and eigenvectors given by
$$ \tilde{\mu}_1=\frac{\rho^2}{\theta}, \hspace{.5cm} \tilde{\mu}_2=\theta, \hspace{.5cm} \mbox{and}\hspace{.5cm} \mathbf{\tilde{e}}_1={\nabla \Psi}/{\| \nabla \Psi \|}.$$  
Here we assume that the orthogonal eigenvectors of $\mathbf {\tilde{J}}$ are in turn orthogonal and tangential to the curve defined as the set for which $\Psi(\mathbf {x})=0$. Given that 
$\rho$ is constant along this curve, it is reasonable to assume there will be no movement of the mesh in that direction, so the eigenvalue corresponding to the tangential eigenvector is chosen to be 1, implying the eigenvalue in the orthogonal direction is $\theta/\rho$. Notice that these eigenvalues correspond to those derived for a single  linear feature where $\Psi=\mathbf{x}\cdot\mathbf{e_1}-c$.  This is a very good approximation 
in the regions along the shock that are close to linear where we observe good alignment to the feature (see also the plot on the left of Fig. \ref{metrics_example6b}). Furthermore, the mesh is close to being uniform away from the feature.
\begin{figure}[hhhhhhhh!!!!!]
\includegraphics[height=5.2cm,width=6cm]{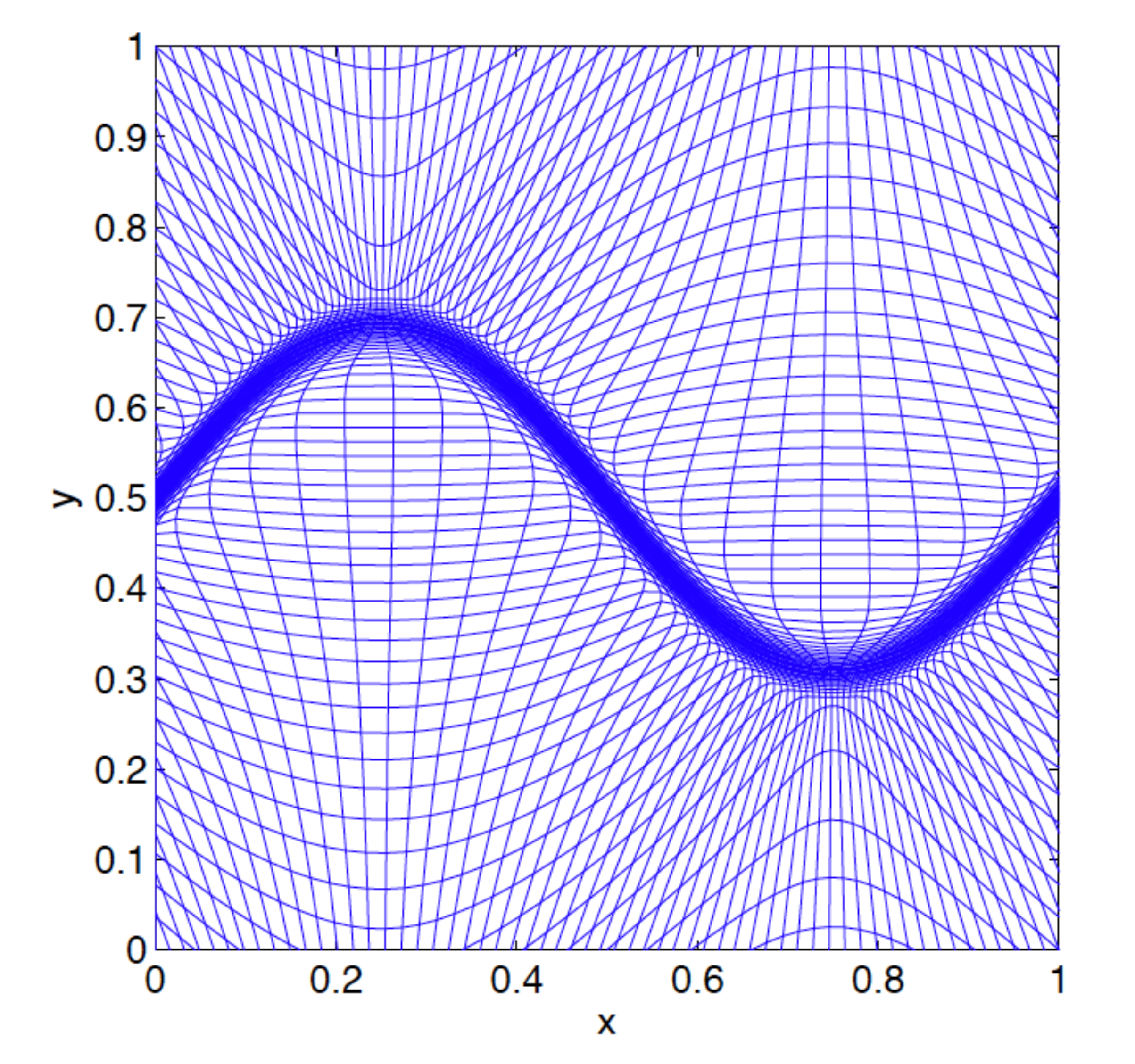}
\includegraphics[height=5.2cm,width=6cm]{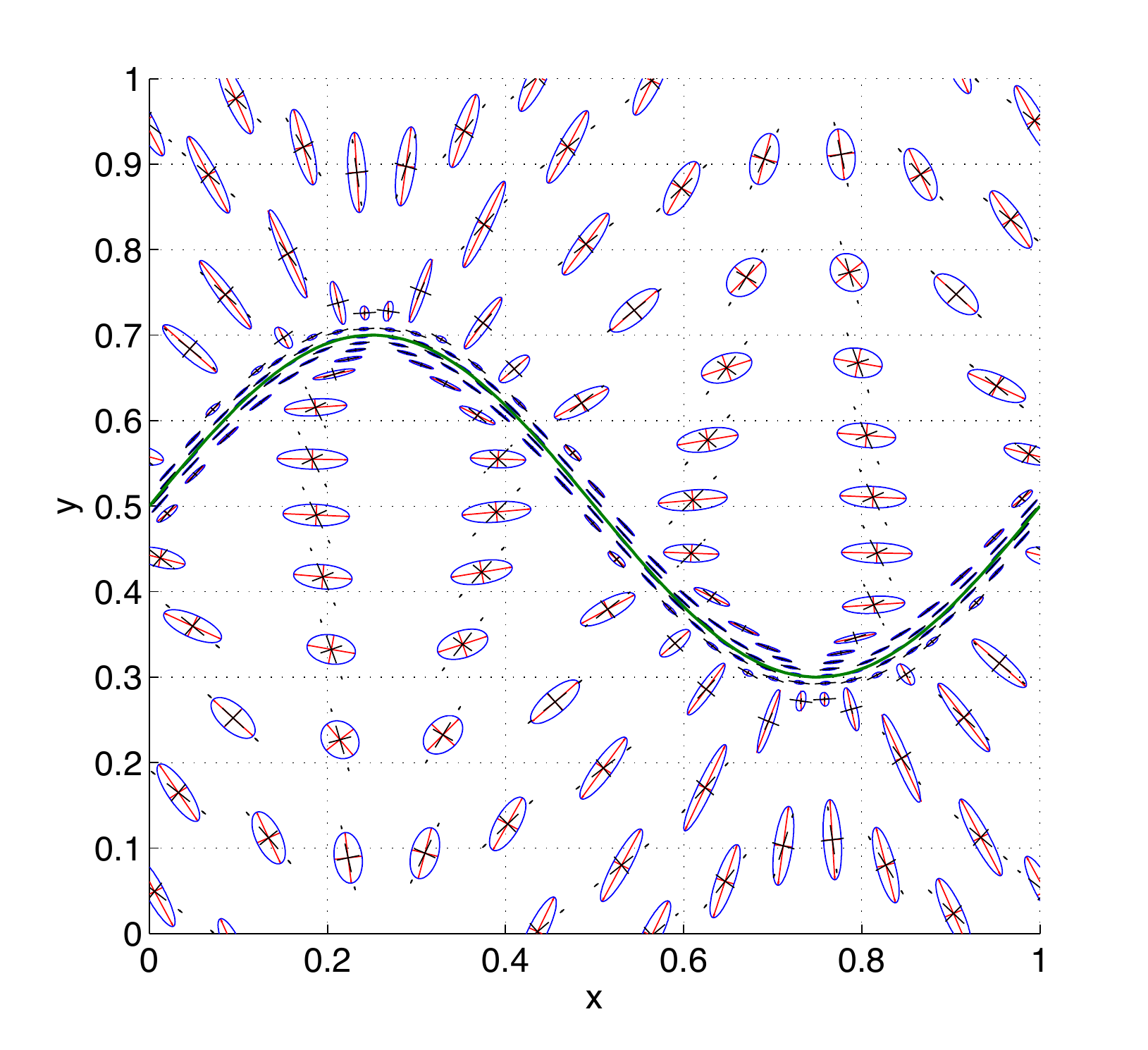}\\
\caption{ \small Example 4: The PMA mesh with 60 $\times$ 60 mesh points (left), and the eigensystems for $\mathbf{J}$ (red) and $\mathbf{\tilde{J}}$ (black) (right). The solid green line represents where 
where $\Psi=0$ and the density function is at a maximum.}
\label{mesh6}
\end{figure}
\begin{figure}[hhhhhhhhHHHHHHHHHHHHHHHHHHHHHHHHHHHHHHHHHHHHHHHHHHHHHHHHHHHHHHHHHHHHHHHHHH!!!!!]
\includegraphics[height=5.2cm,width=6cm]{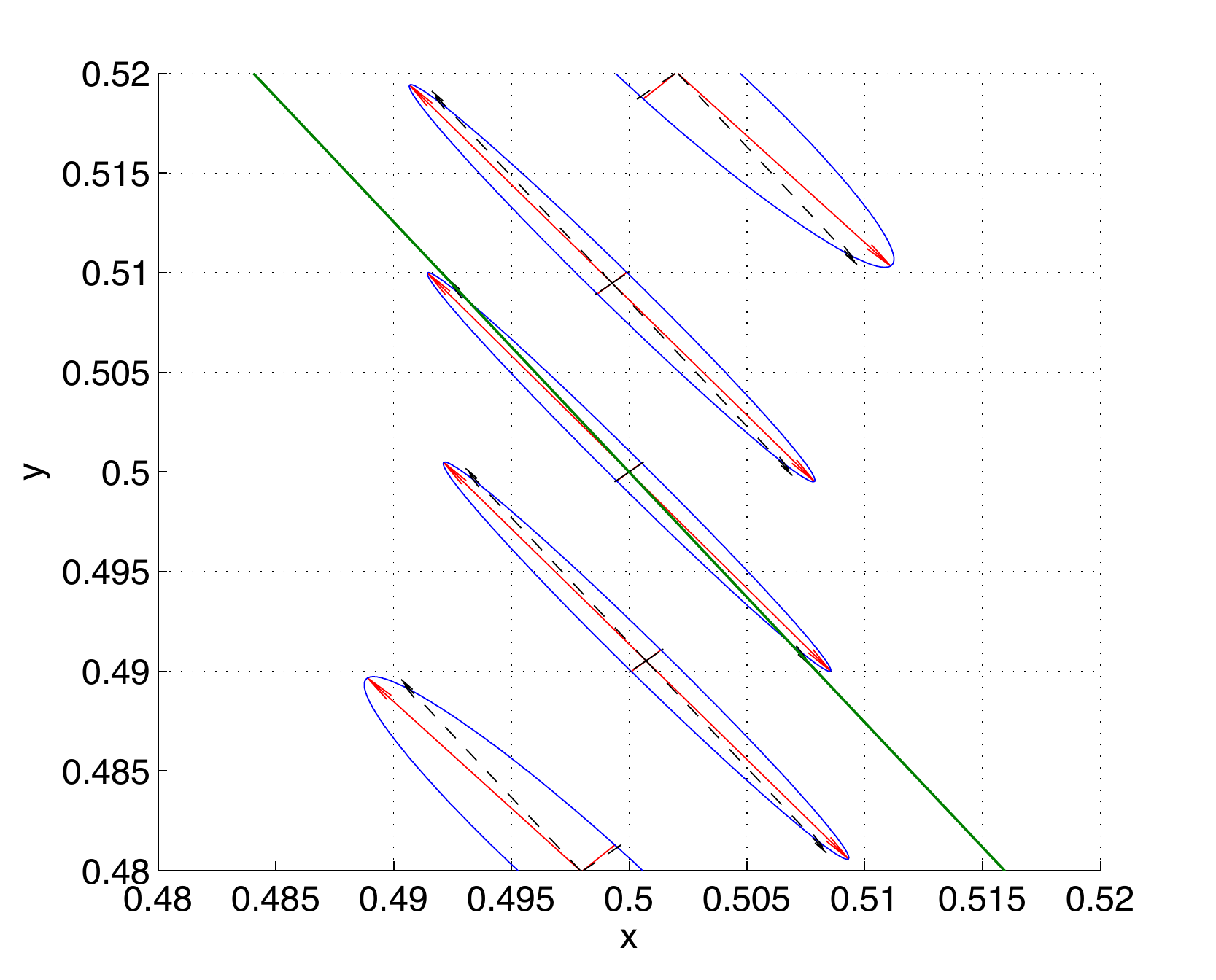}\includegraphics[height=5.2cm,width=6cm]{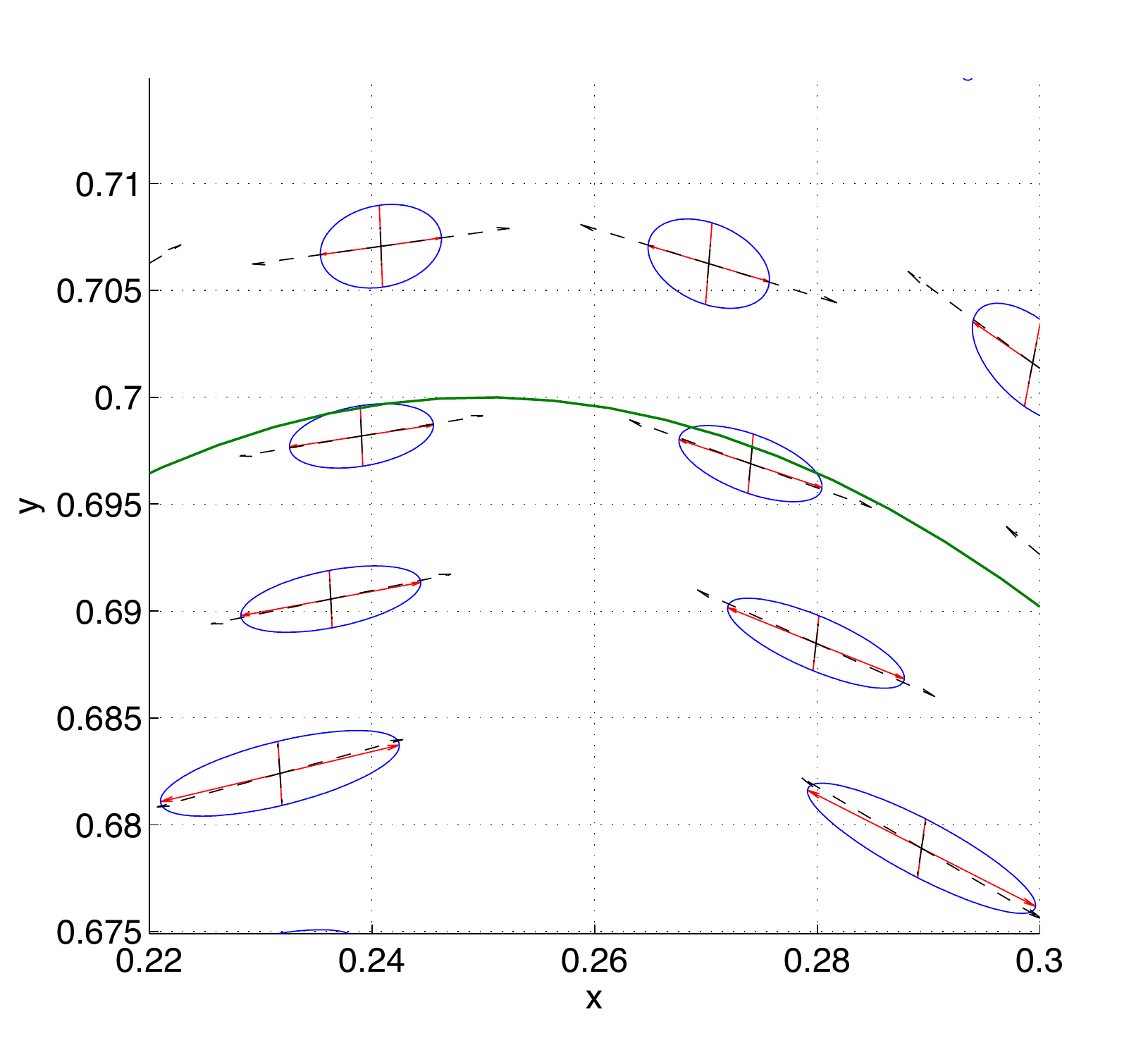}\\
\caption{Example 4: The eigenplot for $\mathbf{J}$ (red) and $\mathbf{\tilde{J}}$ (black) for two different regions along the shock where $\Psi$ has different curvature. The plot on the right depicts a region with more curvature than the plot on the left. In both cases the green solid line depicts where $\Psi=0$.}
\label{metrics_example6b}
\end{figure}
However, in regions with more curvature the mesh elements are less anisotropic as the plot on the right of Fig. \ref{metrics_example6b} demonstrates. Interestingly, the eigenvalues are not well approximated there but the eigenvectors are. In fact we observe that the eigenvectors are approximately tangential and orthogonal to the shock along the entire curve $\Psi({\mathbf x}) = 0$.
\section{Conclusions}
We have shown that a  mesh redistribution method that is based on equidistributing a scalar density function via solving the Monge-Amp\`ere equation has the
capability of producing anisotropic meshes. Furthermore, we have rigorously shown this for a model 
problem comprising orthogonal linear features by
deriving the exact Metric Tensor to which these meshes align. It is quite fascinating that this Metric Tensor has a very similar form to those traditionally used in variational methods. Given that determination of such a tensor
is a difficult task, it would definitely be advantageous if an optimal Metric Tensor arose naturally from the solution of the Monge-Amp\`ere equation. However, a closer examination of how this Metric Tensor is related to those known to minimise interpolation error 
is required. These results have been verified numerically using the Parabolic Monge-Amp\`ere algorithm, a very robust and cheap algorithm. To analyse the level of anisotropy 
we have considered various mesh quality measures and  as in \cite{HRBook} visualised the circumscribed ellipses for eigensystems of Jacobians of mesh mappings
at the mesh elements, which has often
proved much more informative than visualising the meshes themselves. We have also demonstrated that the  
results for the linear case can be used to approximate alignment for more complicated flow structures.  A more rigorous study of features with curvature will form the basis of a subsequent paper.
\section*{Acknowledgments}The authors thank Phil Browne (University of Reading), Mike Cullen (UK Met Office), Weizhang Huang (University of Kansas), and  J F Williams (Simon Fraser University), for many useful suggestions and encouragement. The research of the second and third author is partially supported by NSERC Grant A8781.

\end{document}